\newcommand{\vertiii}[1]{{\left\vert\kern-0.25ex\left\vert\kern-0.25ex\left\vert #1 
    \right\vert\kern-0.25ex\right\vert\kern-0.25ex\right\vert}}
\newtheorem{theorem}{Theorem}[section]
\newtheorem{thmx}{Theorem}
\newtheorem{lemma}[theorem]{Lemma}
\newtheorem{claim}{Claim}
\newtheorem{question}{Question}
\theoremstyle{definition}
\newtheorem{fact}{Fact}
\author{Andrés Quilis}
\address{Universitat Polit\`ecnica de Val\`encia. Instituto Universitario de Matem\'atica Pura y Aplicada, Camino de Vera, s/n
46022 Valencia (Spain); and Czech Technical University in Prague, Faulty of Electrical Engineering. Department of Mathematics, Technick\'a 2, 166 27 Praha 6 (Czech Republic)}
\email{anquisan@posgrado.upv.es}
\subjclass[2020]{46B03, 46B10}
\title[Renormings preserving local geometry]{Renormings preserving local geometry at countably many points in spheres of Banach spaces and applications}
\keywords{Uniformly Gâteaux differentiable norms; $C^k$-smooth norms; LFC norms; separable spaces; $c_0(\Gamma)$}
\begin{document}
\begin{abstract} 
We develop tools to produce equivalent norms with specific local geometry around infinitely many points in the sphere of a Banach space via an inductive procedure. We combine this process with smoothness results and techniques to solve two open problems posed in the recently published monograph \cite{GuiMonZiz22} by A. J. Guirao, V. Montesinos and V. Zizler. Specifically, on the one hand we construct in every separable Banach space admitting a $C^k$-smooth norm an equivalent norm which is $C^k$-smooth but fails to be uniformly Gâteaux in any direction; and on the other hand we produce in $c_0(\Gamma)$ for any infinite $\Gamma$ a $C^\infty$-smooth norm whose ball is dentable but whose sphere lacks any extreme points. 
\end{abstract}
\maketitle

\section{Introduction}

In an infinite-dimensional Banach space, the existence of equivalent norms with different properties related to differentiability and convexity has a profound impact on the structure and geometry of the space and its dual. This has lead to Renormings in Banach spaces being a very active and interesting topic of research in Banach space theory. We refer the reader to \cite{DevGodZiz93,God01} and to the recent monograph \cite{GuiMonZiz22} for a deep study of this subject, and as a sample of the wealth of results in it. For a selection of relatively recent articles dealing with concepts close the topics we discuss in this note we point to \cite{Ryc05,JohRyc07,HajJoh09,HajMonZiz12,HajTal14,HajRus17}.

Differentiability and convexity qualities of a given norm in a Banach space are related to the geometry of its unit sphere. Among these properties, some relate to the local shape around a specific point in the sphere, while others ask for certain uniformity on the entire set. As a central example to this note, notice that to check that the norm is Fréchet differentiable, we only have to look at a small enough neighborhood of each point in the sphere individually. On the other hand, to obtain that the norm is uniformly Gâteaux in a particular direction, the size of the neighborhood that we check cannot depend on the point of the sphere we are considering, but only on the quality of the approximation to the derivative. 

There are many examples of norms in Banach spaces satisfying strong local properties but failing their uniform counterpart. A well known example is the fact that a Banach space admitting a uniformly rotund norm is necessarily superreflexive (see e.g.: Theorem IV.4.1 in \cite{DevGodZiz93}), while the class of Banach spaces admitting a locally uniformly rotund norm is considerably larger (see e.g.: Chapter VII in \cite{DevGodZiz93}), including for instance all weakly Lindelöf determined spaces or all duals of Asplund spaces.

More relevant to our discussion is the result proven in \cite{BorFab93b} by Borwein and Fabián, which states that in every separable Banach space there exists an equivalent norm which is uniformly Gâteaux but is not everywhere Fréchet differentiable.

In this note we develop a process to construct norms in Banach spaces satisfying strong local properties of smoothness, which fail to be uniformly Gâteaux differentiable in any direction. In the special case of $c_0(\Gamma)$, the same process allows us to obtain a $C^\infty$-smooth norm locally dependent on finitely many coordinates (LFC), which moreover satisfies the somewhat opposite property of having a dentable unit ball.

Briefly and intuitively, the strategy used to produce our renormings consists in defining countably many equivalent norms in a Banach space, all of which satisfy the desired local properties of smoothness but with decreasing quality around respective specific points. The final step of the process consists in combining all of these renormings into a single one which preserves the local geometry of each individual renorming at their respective critical region. The idea to make this combination possible is to define the sequence of individual renormings using an almost biorthogonal sequence of pairs $\{(x_n,f_n)\}\subset S_X\times S_{X^*}$, where moreover $\{f_n\}_{n\in\mathbb{N}}$ is weak$^*$ null. This will ensure that the different renormings do not overlap each other excessively in their critical regions and that the final renorming is indeed equivalent to the original one. 

Let us now state the two specific applications that we obtain:

\begin{thmx}
\label{Main_Theorem_A}    
Let $X$ be a separable Banach space with a $C^k$-smooth norm with $k\in\mathbb{N}\cup\{\infty\}$. Then $X$ admits an equivalent $C^k$-smooth norm which is not uniformly Gâteaux in any direction. Moreover, the unit ball in this norm is dentable. 
\end{thmx}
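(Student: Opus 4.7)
My plan is to follow the three-step program sketched in the introduction, specialized so as to deliver simultaneously the failure of uniform Gâteaux smoothness in every direction and the dentability of the unit ball.

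\textbf{Step 1 (Setup).} Fix the given $C^k$-smooth norm $\|\cdot\|$ on $X$ and, using separability, construct an almost biorthogonal system $\{(x_n,f_n)\}_{n\in\mathbb{N}}\subset S_X\times S_{X^*}$ with $f_n\to 0$ in the weak$^*$ topology. Simultaneously select a family of test directions $\{h_n\}_{n\in\mathbb{N}}$ that is dense in $S_X$; the role of $h_n$ will be to witness a quantitative failure of uniform Gâteaux smoothness at the point $x_n$.

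\textbf{Step 2 (Local norms).} For each $n\geq 1$, produce an equivalent $C^k$-smooth norm $\|\cdot\|_n$ coinciding with $\|\cdot\|$ outside a small neighborhood of $x_n$, but carrying near $x_n$ a $C^k$-smooth bump perturbation of the form $\varphi_n(f_n(\cdot))$ multiplied by a localizing factor. Choose $\varphi_n$ so that the symmetric difference quotients in direction $h_n$ fail to converge uniformly on any neighborhood of $x_n$, producing a positive lower bound, independent of $n$, for the modulus of uniform Gâteaux non-smoothness of $\|\cdot\|_n$ at $x_n$ in direction $h_n$. For the distinguished index $n=0$, instead choose $\|\cdot\|_0$ so as to be locally uniformly rotund at $x_0$ while remaining $C^k$-smooth globally, turning $x_0$ into a denting point of its unit ball.

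\textbf{Step 3 (Combination and verification).} Apply the combination tool developed earlier in the paper to glue $\{\|\cdot\|_n\}_{n\geq 0}$ into a single equivalent norm $|\cdot|$ whose local geometry near each $x_n$ matches that of $\|\cdot\|_n$. The weak$^*$ nullity of $\{f_n\}$ and the almost biorthogonality of the system guarantee that the perturbations do not accumulate, so $|\cdot|$ remains equivalent to $\|\cdot\|$ and inherits $C^k$-smoothness from its local agreement with the $\|\cdot\|_n$'s. Dentability of the unit ball follows because being a denting point is a purely local property and survives at $x_0$. For failure of uniform Gâteaux smoothness in an arbitrary direction $h\in S_X$, pick a subsequence $h_{n_k}\to h$ from the dense set of test directions: the points and scales witnessing the modulus lower bound of $\|\cdot\|_{n_k}$ at $x_{n_k}$ in direction $h_{n_k}$ are inherited by $|\cdot|$, and a triangle-inequality perturbation transfers the obstruction to direction $h$.

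\textbf{Main obstacle.} The crux is engineering Step 2 so that the bumps genuinely obstruct uniform Gâteaux smoothness in each test direction $h_n$ with a uniform quantitative lower bound, while being small enough in amplitude for the combination of Step 3 to preserve both $C^k$-smoothness and the equivalence to $\|\cdot\|$. The weak$^*$ nullity of $\{f_n\}$ is the technical ingredient that allows these competing demands to be reconciled.
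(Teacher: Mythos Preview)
Your Step 2 contains a concrete mismatch between the proposed perturbation and the intended obstruction. A function of the form $\varphi_n(f_n(\cdot))$ is constant along every affine subspace parallel to $\ker f_n$, so $\varphi_n(f_n(x\pm th))=\varphi_n(f_n(x))$ whenever $f_n(h)=0$ and the perturbation contributes nothing to $D(\cdot,x,h,t)$; multiplying by a localizing factor that equals $1$ near $x_n$ does not change this for small $t$. Thus your bump is invisible precisely in the directions with $f_n(h)$ small, and you cannot repair this by demanding $|f_n(h_n)|\geq c>0$: along any subsequence with $h_{n_k}\to h$ one has $f_{n_k}(h_{n_k})\to 0$, because $\{f_n\}$ is weak$^*$ null and $\|h_{n_k}-h\|\to 0$. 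The density-of-test-directions device therefore cannot compensate for the wrong bump shape, and the ``uniform lower bound independent of $n$'' that Step~3 needs is not produced.

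The paper's route avoids all of this by not targeting any particular direction. Near each $x_n$ one first builds a genuinely \emph{non-smooth} model norm whose unit ball has a cone tip at $(1-\tfrac{\delta}{2})x_n$ (Lemma~\ref{Prop1_Description_norm}); for this norm the quotient $D(\cdot,(1-\tfrac{\delta}{2})x_n,h,t)$ is bounded below \emph{uniformly in $t$} for every $h$ with $|f_n(h)|$ small (Lemma~\ref{Lemma1_Derivative}). The $C^k$-smoothness is imposed only afterwards, by approximating each such norm via the H\'ajek--Talponen theorem closely enough that the lower bound survives at a scale $t_n\sim\eta_n\to 0$ (Lemma~\ref{Lemma4_Approx_estimates}). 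Since $\{f_n\}$ is weak$^*$ null, any fixed $h$ eventually has $|f_n(h)|$ small, and $t_n\to 0$ supplies the obstruction at every scale---no transfer from approximate directions is needed. The same cone tips yield slices of small diameter in the directions $f_n$ (Lemma~\ref{Lemma2_Slices}); because the combined norm of Theorem~\ref{Theorem1_Main_construction} dominates each approximant, these slices can only shrink, so dentability comes for free rather than from a separate LUR construction at a distinguished point.
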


\begin{thmx}
\label{Main_Theorem_B}
For every infinite set $\Gamma$, the space $c_0(\Gamma)$ admits an equivalent norm which is $C^{\infty}$-smooth and LFC whose unit ball is dentable. Moreover, this norm is not uniformly Gâteaux in any direction.
\end{thmx}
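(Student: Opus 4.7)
The plan is to specialize the general combination framework sketched in the introduction to $X=c_0(\Gamma)$, using the canonical coordinate system. Pick a countable family of distinct indices $\{\gamma_n\}_{n\ge 1}\subset\Gamma$ and an additional $\gamma_0\in\Gamma\setminus\{\gamma_n:n\ge 1\}$, available since $\Gamma$ is infinite, and set $x_n=e_{\gamma_n}$, $f_n=e^*_{\gamma_n}$. These form a norm-one biorthogonal system in $c_0(\Gamma)\times\ell_1(\Gamma)$, and $f_n\to 0$ in the weak$^*$ topology because every element of $c_0(\Gamma)$ has countable support on which it vanishes at infinity. This is precisely the hypothesis required by the combination machinery.

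Starting from a classical $C^\infty$-smooth LFC renorming $|\cdot|$ of $c_0(\Gamma)$ (Hájek, Haydon), I would first perform a local surgery near $e_{\gamma_0}$: exploiting the LFC structure, I work inside the finite-dimensional coordinate block on which $|\cdot|$ depends in a neighborhood of $e_{\gamma_0}$ and smooth the norm there so that it becomes strictly convex on a small patch around $e_{\gamma_0}$ while remaining $C^\infty$-smooth and LFC globally. Producing a rotund, smooth, LFC patch is a finite-dimensional matter and can be accomplished by a standard convolution/regularization of a strictly convex finite-dimensional norm, trivially extended in the remaining coordinates. After this modification, $e_{\gamma_0}$ is a denting point of the unit ball.

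Next, for each $n\ge 1$ I would build an equivalent $C^\infty$-smooth LFC norm $|\cdot|_n$ that coincides with $|\cdot|$ outside a thin slice $\{y:f_n(y-x_n)<\eta_n\}$ and, inside it, carries a prescribed $C^\infty$ oscillation engineered so that the modulus controlling uniform Gâteaux differentiability is bounded away from zero at $x_n$. The slices can be made pairwise disjoint, and disjoint from any fixed neighborhood of $e_{\gamma_0}$, by choosing $\eta_n$ sufficiently small; the weak$^*$-nullness of $\{f_n\}$ is what allows this calibration, and is also the ingredient that will let me fuse the $|\cdot|_n$ into one equivalent norm $\|\cdot\|$ via the inductive procedure from the first part of the paper. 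Since locally only finitely many slices matter, $\|\cdot\|$ inherits $C^\infty$-smoothness and the LFC property from its pieces, preserves the denting point at $e_{\gamma_0}$, and retains the obstruction to uniform Gâteaux differentiability at each $x_n$.

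The principal obstacle I anticipate is arranging the oscillations at the $x_n$ so that uniform Gâteaux differentiability fails in every direction $v$, not merely in the countably many canonical directions $x_n$. My proposal is to design the $n$-th oscillation to involve many coordinate directions simultaneously so that every $v$ with a non-trivial component in the $n$-th slice contributes to the second-order modulus, and to let the witnessing scale at $x_n$ tend to zero as $n\to\infty$; together with a density argument for arbitrary $v\in S_{c_0(\Gamma)}$ (since $c_0(\Gamma)$ vectors are approximable by finitely supported ones), this should force failure of uniform Gâteaux differentiability in every direction. A secondary but still delicate issue is keeping the LFC property stable under the combination step, which relies precisely on the weak$^*$-nullness of $\{f_n\}$ to ensure that at any point only finitely many building blocks actually contribute.
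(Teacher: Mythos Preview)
Your proposal has two genuine gaps.

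\textbf{Dentability.} Your plan is to manufacture a denting point at $e_{\gamma_0}$ by making the norm ``strictly convex on a small patch'' while keeping it LFC globally. These two requirements are incompatible in an infinite-dimensional space. If the norm is LFC at $e_{\gamma_0}$, then in a neighbourhood it depends only on finitely many functionals $g_1,\dots,g_k$; for any nonzero $v\in\bigcap_i\ker g_i$ and small $t$, the points $e_{\gamma_0}\pm tv$ lie on the sphere, so $e_{\gamma_0}$ is not an extreme point. But every denting point is extreme (any slice containing $\tfrac{y+z}{2}$ must contain one of $y,z$ and hence has diameter at least $\|y-z\|/2$). Thus an LFC norm cannot have a denting point, and your rotund finite-dimensional patch, once ``trivially extended in the remaining coordinates'', produces a flat face rather than a denting point. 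The paper circumvents this entirely: it never seeks a single denting point. Instead, the cone-type building blocks $\vertiii{\cdot}_{(e_n,e_n),\delta}$ have slices of diameter $\lesssim \tfrac{8}{\delta}(\varepsilon+\eta_n)$ near $e_n$ (Lemmas \ref{Lemma2_Slices} and \ref{Lemma4_Approx_estimates}); since $\eta_n\to 0$, the final norm has nonempty slices of arbitrarily small diameter, giving dentability without any extreme points.

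\textbf{Failure of uniform G\^ateaux in every direction.} Your proposed mechanism---designing the $n$-th bump to ``involve many coordinate directions simultaneously'' and then invoking a density argument over $v\in S_{c_0(\Gamma)}$---is both unnecessary and unreliable (uniform G\^ateaux differentiability in a direction is not known to be open or closed in $v$, so density does not transfer). The paper's mechanism is much simpler and is driven directly by the weak$^*$ nullness you already identified: the basic norm $\vertiii{\cdot}_{(e_n,e_n),\delta}$ has a conical corner at $(1-\tfrac{\delta}{2})e_n$, and Lemma \ref{Lemma1_Derivative} gives a lower bound on $D$ at that point in direction $h$ of the form $c\big(\delta-(4-\delta)|\langle f_n,h\rangle|\big)$. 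For any \emph{fixed} $h$, the weak$^*$ nullness of $\{f_n\}$ forces $|\langle f_n,h\rangle|\to 0$, so for large $n$ this lower bound exceeds a fixed $\varepsilon_0>0$; after the $C^\infty$ LFC approximation (via the lattice property and the Fabi\'an--H\'ajek--Zizler theorem) the estimate survives at scale $t_0\asymp\eta_n\to 0$, witnessing failure of uniform G\^ateaux in direction $h$. No density argument and no multi-directional oscillation design is needed.
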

The motivation for these applications comes from the open problems chapter of the monograph \cite{GuiMonZiz22}. Specifically, the two previous theorems answer the two following questions respectively:

\begin{question}[Page 496, 52.1.2.1 in \cite{GuiMonZiz22}]
Can every infinite-dimensional separable reflexive space be renormed by a Fréchet differentiable norm which is not uniformly Gâteaux?
\end{question}

Indeed, it is well known that every separable reflexive space admits a Fréchet smooth (and thus $C^1$-smooth) renorming (see e.g. \cite{DevGodZiz93}), so Theorem \ref{Main_Theorem_A} solves this question in particular.

\begin{question}[Page 497, 52.1.3.3 in \cite{GuiMonZiz22}]
    Is there a norm on $c_0$ which is $C^\infty$-smooth, and whose ball is dentable but whose sphere has no extreme points?
\end{question}

This second question is answered by Theorem \ref{Main_Theorem_B} since the sphere of an LFC norm in an infinite-dimensional Banach space cannot have any extreme points.

The authors of \cite{GuiMonZiz22} also include a weaker version for each one of these two questions: namely, they ask whether every infinite-dimensional separable superreflexive space can be renormed by a Fréchet smooth norm which is not uniformly Fréchet (Page 496, 52.1.2.4); and if $c_0$ can be renormed with a $C^2$-smooth norm whose ball is dentable (Page 498, 52.1.4.2). Hence, both of these problems are also now solved. 

Let us finish the introduction by explaining the structure of this note. The content is divided into three sections: Section 2 is dedicated to fixing the notation and giving preliminary definitions. Section 3 contains the construction of the basic renorming from which we will obtain the final result. We study this renorming by giving quantitative estimates on the speed of convergence for the derivative of the norm at specific points, and on the diameter of certain slices of the unit ball. Finally, in section 4 we use known and deep results in Banach space theory and smoothness to develop the process to combine the individual renormings in the previous section into one final renorming in a suitable way.

\section{Notation and preliminaries}
We introduce some basic notation and results. Some more definitions will be presented in the main sections when needed.

All Banach spaces in this note are assumed to be real Banach spaces. A Banach space $X$ endowed with a norm $\|\cdot\|$ will be denoted by $(X,\|\cdot\|)$, and $\|\cdot\|$ will always denote the starting norm, as opposed to the renormings of $\|\cdot\|$ defined on the space $X$, denoted always by the symbol $\vertiii{\cdot}$ with possibly an appropriate subindex. In the dual space $X^*$, the dual norm induced by an equivalent norm $\vertiii{\cdot}$ will be denoted by $\vertiii{\cdot}^*$.

In a Banach space $(X,\|\cdot\|)$, given an equivalent norm $\vertiii{\cdot}$ and for any point $x\in X$ and $r>0$, the ball centered at $x$ of radius $r$ is denoted by $B_{\vertiii{\cdot}}(x,r)$. The unit ball of a space $(X,\|\cdot\|)$ in an equivalent norm $\vertiii{\cdot}$ is denoted by $B_{(X,\vertiii{\cdot})}$, while the unit sphere will be written as $S_{(X,\vertiii{\cdot})}$. We may simply write $B_{\vertiii{\cdot}}$ and $S_{\vertiii{\cdot}}$ wherever no ambiguity is possible. 

For an infinite set $\Gamma$, we consider the Banach space $c_0(\Gamma)$ to be the space formed by all real valued functions defined on $\Gamma$ such that for every $x\in c_0(\Gamma)$, the set $\{\gamma\in \Gamma\colon x(\gamma)>\varepsilon\}$ is finite for all $\varepsilon>0$. We will consider the usual supremum norm as the starting norm in $c_0(\Gamma)$, denoted by $\|\cdot\|_{\infty}$. For every $\gamma\in \Gamma$, we will denote by $e_\gamma$ the vector in $c_0(\Gamma)$ such that $e_\gamma(\gamma)=1$ and $e_\gamma(\xi)=0$ for all $\xi\in\Gamma\setminus\{\gamma\}$.

We will use the usual definitions of \emph{(uniform) Gâteaux, Fréchet and 
$C^k$-smoothness for $k\in\mathbb{N}\cup\{\infty\}$}. For a deep study of differentiability in Banach spaces we direct the reader to the monographs \cite{DevGodZiz93,HajMic14,GuiMonZiz22}. We briefly outline the characterizations of smoothness concepts that we will use in this note while introducing some more notation:

Given a norm $\vertiii{\cdot}$ in a Banach space $X$, a point $x\in X\setminus \{0\}$, a direction $h\in X\setminus\{0\}$ and $t\neq 0$, we define the quotient:
$$D(\vertiii{\cdot},x,h,t)=\frac{\vertiii{x+th}+\vertiii{x-th}-2\vertiii{x}}{t}. $$
The norm $\vertiii{\cdot}$ is Gâteaux differentiable at $x$ in the direction of $h$ if and only if $D(\vertiii{\cdot},x,h,t)$ converges to $0$ as $t$ goes to $0$. The norm $\vertiii{\cdot}$ is Fréchet differentiable at $x$ if and only if the previous limit converges to $0$ as $t$ goes to $0$ uniformly with respect to $h$, when $h$ ranges in the sphere of an equivalent norm in $X$. We say that $\vertiii{\cdot}$ is Gâteaux (resp. Fréchet) if it is Gâteaux (resp. Fréchet) at every point in $X\setminus\{0\}$. 

The norm $\vertiii{\cdot}$ is uniformly Gâteaux in the direction of $h\in X\setminus \{0\}$ if and only if $D(\vertiii{\cdot},x,h,t)$ converges to $0$ as $t$ goes to $0$, and this limit is uniform in $x$ when $x$ ranges in the sphere of an equivalent norm in $X$. Notice that $\vertiii{\cdot}$ is uniformly Gâteaux in the direction of $h\in X\setminus\{0\}$ if and only if it is uniformly Gâteaux in the direction of $\lambda h$ for all $\lambda\neq 0$.

We finish the section by recalling some more definitions: 

If $U$ is an open subset of a Banach space $X$, and $f\colon U\rightarrow \mathbb{R}$ is an arbitrary function, we say that $f$ \emph{locally depends on finitely many coordinates} (is LFC, for short) at a point $x\in U$, if there exists an open neighborhood $W\subset U$ of $x$ and a finite set of functionals $\{f_1,\dots,f_n\}\in X^*$ such that $f(y)=f(x)$ if $y\in W$ with $\langle f_i,x\rangle=\langle f_i,x\rangle$ for all $i=1,\dots n$. We say that $f$ is LFC if it is LFC at every point in its domain.

We say that a norm $\vertiii{\cdot}$ in a Banach space is LFC if it is LFC on $X\setminus \{0\}$. 

Given a convex set $C$ of a Banach space $X$, we say that a point $x\in C$ is an \emph{extreme point of $C$} if for every pair of points $y,z\in C$ such that $x=\frac{y+z}{2}$ we have that $x=y=z$. Observe that if $\vertiii{\cdot}$ is an LFC norm in an infinite-dimensional Banach space, then no point in the unit ball $B_{\vertiii{\cdot}}$ is an extreme point. 

In a Banach space $X$ with the norm $\vertiii{\cdot}$, given a convex set $C\subset X$, a functional $f\in X^*$ and $a>0$, the \emph{slice defined by $f$ and $a$ of $C$} is the set $S(C,f,a)=\{x\in C\colon \langle f,x\rangle>a\}$. The diameter of a subset $C$ of $X$ in an equivalent norm $\vertiii{\cdot}$ will be denoted by $\text{diam}_{\vertiii{\cdot}}(C)$.

We say that a convex set $C$ is \emph{dentable} if it admits non-empty slices of arbitrarily small diameter in an equivalent norm. 

\section{A renorming with suitable local properties}

In this section we define an equivalent norm in any Banach space $(X,\|\cdot\|)$ associated to a pair $(x_0,f_0)\in S_{\|\cdot\|}\times S_{\|\cdot\|^*}$ such that $\langle f_0,x_0\rangle =1$ and to a positive number $0<\delta<1/2$. After defining this norm we will obtain some differentiability and dentability estimates.

Fix a Banach space $(X,\|\cdot\|)$. For each equivalent norm $\vertiii{\cdot}$ in $X$, each $f\in S_{\vertiii{\cdot}^*}$ and each $0<r<1$, define the sets:
\begin{align*}
        C^+(f,r,\vertiii{\cdot})&=\{y\in X\colon \langle f,y\rangle \geq r\vertiii{y}\},~\text{and}\\
        C^-(f,r,\vertiii{\cdot})&=\{y\in X\colon \langle f,y\rangle \leq -r\vertiii{y}\},
\end{align*}
which are closed convex cones in $X$. We define as well the cone $C(f,r,\vertiii{\cdot})=C^+(f,r,\vertiii{\cdot})\cup C^-(f,r,\vertiii{\cdot})$. We will write $C^+(f,r)$, $C^-(f,r)$ and $C(f,r)$ to denote the cones in the starting norm $\|\cdot\|$. Let us state a simple fact about some points in the complement of these cones that will be useful in the last section:

\begin{fact}
\label{Fact1_Neighbourhood_outside}
Let $(X,\|\cdot\|)$ be a Banach space, let $f_0\in S_{\|\cdot\|^*}$ and let $0<\delta<\frac{1}{2}$. If $x\in X$ satisfies that $|\langle x,f_0\rangle| <\frac{\|x\|}{8}$, then $y\notin C(f_0,1-\delta)$ for all $y\in B_{\|\cdot\|}\left(x,\frac{\|x\|}{8}\right)$. 
\end{fact}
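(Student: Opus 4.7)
The plan is to unwind the definition of $C(f_0,1-\delta)$ and then apply the triangle inequality twice, once to bound $|\langle f_0,y\rangle|$ from above and once to bound $\|y\|$ from below. Recall that $y\in C(f_0,1-\delta)$ exactly when $|\langle f_0,y\rangle|\geq (1-\delta)\|y\|$, so the task is to check that every $y$ in the prescribed ball satisfies the reverse strict inequality.

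First I would observe that the hypothesis $|\langle x,f_0\rangle|<\|x\|/8$ forces $x\neq 0$, and hence for any $y\in B_{\|\cdot\|}(x,\|x\|/8)$ the reverse triangle inequality gives
\[
\|y\|\geq \|x\|-\|y-x\|\geq \tfrac{7}{8}\|x\|>0,
\]
so in particular $\|x\|\leq \tfrac{8}{7}\|y\|$.

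Next, since $f_0\in S_{\|\cdot\|^*}$, I would bound
\[
|\langle f_0,y\rangle|\leq |\langle f_0,x\rangle|+|\langle f_0,y-x\rangle|<\tfrac{\|x\|}{8}+\tfrac{\|x\|}{8}=\tfrac{\|x\|}{4}.
\]
Combining the two estimates yields
\[
|\langle f_0,y\rangle|<\tfrac{\|x\|}{4}\leq \tfrac{2}{7}\|y\|<\tfrac{1}{2}\|y\|<(1-\delta)\|y\|,
\]
where the last inequality uses $\delta<\tfrac{1}{2}$. Therefore $y$ lies in neither $C^+(f_0,1-\delta)$ nor $C^-(f_0,1-\delta)$, so $y\notin C(f_0,1-\delta)$ as claimed.

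There is essentially no obstacle here; the argument is just a direct application of the triangle inequality, and the constant $1/8$ is chosen comfortably large to make the inequality strict for every $\delta<1/2$. The only point that requires a moment's care is to ensure strict inequality (which comes from the strict hypothesis $|\langle x,f_0\rangle|<\|x\|/8$) and to verify that the choice of $1/8$ suffices uniformly in $\delta\in(0,1/2)$, which it does since $2/7<1/2$.
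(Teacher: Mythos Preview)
Your proof is correct and is exactly the kind of elementary triangle-inequality computation the paper has in mind; the paper itself simply states ``The proof of this fact is elementary'' without giving details, so your argument fills in precisely what was omitted.
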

\begin{proof}
    The proof of this fact is elementary.
\end{proof}

\subsection{Definition of the norm}

We proceed to define the unit ball of the desired norm. Consider a pair $(x_0,f_0)\in S_{\|\cdot\|}\times S_{\|\cdot\|^*}$ with $\langle f_0,x_0\rangle =1$ and fix $0<\delta<1/2$ for the rest of the section. Consider the set:
\begin{align*}
    B_{(x_0,f_0),\delta}=\overline{\text{conv}}\left( \left(B_{\|\cdot\|}\setminus C\left(f_0,1-\delta\right)\right)\bigcup\left\{\left(1-\frac{\delta}{2}\right)x_0,-\left(1-\frac{\delta}{2}\right)x_0\right\}\right),
\end{align*}
which is a closed absolutely convex set contained in the unit ball $B_{\|\cdot\|}$. We are going to show that this set is the unit ball of an equivalent norm in $X$. To this end, let us define an auxiliary function which will be used throughout the rest of the section:

For $y\in X$, notice that there exists $t_0\geq 0$ such that 
$$ (1-\delta)\|y+(t-\langle f_0,y\rangle)x_0\| \leq t $$
for all $t\geq t_0$. Since $(1-\delta)\|y-\langle f_0,y\rangle x_0\|\geq 0$, by continuity we obtain that there exists a non-negative scalar $\psi(y)$ so that 
$$ \psi(y)=(1-\delta)\|y+(\psi(y)-\langle f_0,y\rangle)x_0\|. $$
Moreover, we have that if $ (1-\delta)\|y+(t-\langle f_0,y\rangle)x_0\| \leq t $ for some $t\geq 0$ and $\varepsilon>0$, then $(1-\delta)\|y+(t+\varepsilon-\langle f_0,y\rangle)x_0\| < t+\varepsilon$, which implies that $\psi(y)$ is the unique non-negative number satisfying its defining equality for every $y\in X$. 

We can thus consider the well defined function $\psi\colon X\rightarrow [0,\infty)$ as the map such that $\psi(y)$ is the unique non-negative number satisfying $\psi(y)=(1-\delta)\|y+(\psi(y)-\langle f_0,y\rangle)x_0\|$. Moreover, a similar reasoning provides that $\psi$ is a subadditive function. We have as well that $\psi$ is positively homogeneous, i.e.: for every $\lambda\geq 0$ and every $y\in X$ it holds that $\psi(\lambda y)=\lambda\psi(y)$. 

Additionally, it follows from the definition that for every $y\in X$ and every $\lambda\in \mathbb{R}$, we have that $\psi(y+\lambda x_0)=\psi(y)$; in other words, $\psi$ is invariant by translations in the direction of $x_0$. Importantly, we also obtain that $\psi(y)=0$ if and only if $y=\langle f_0,y\rangle x_0$ for every $y\in X$. 

Figure \ref{Fig1_psi} shows the geometric intuition behind the function $\psi$. 

\begin{figure}
    \label{Fig1_psi}
    \centering
    \includegraphics[scale=0.5]{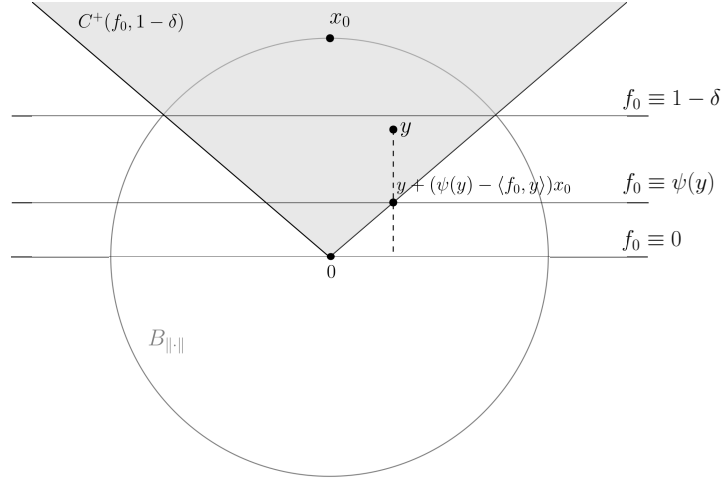}
    \caption{The function $\psi$. The shaded area is the positive cone $C^+(f_0,1-\delta)$.}
\end{figure}

With the definition of the function $\psi$ fixed for the rest of the section, and with its basic properties we have discussed, we can now find a better description of the norm we are constructing:

\begin{lemma}
\label{Prop1_Description_norm}
The set $B_{(x_0,f_0),\delta}$ as defined above is the unit ball of the norm $\vertiii{\cdot}_{(x_0,f_0),\delta}$ defined by:
$$ 
    \vertiii{y}_{(x_0,f_0),\delta}=
    \begin{cases}
        \|y\|,~ &\text{if }y\notin C\left(f_0,1-\delta\right),\\
        \frac{\psi(y)}{1-\delta}+\frac{\langle f_0,y\rangle-\psi(y)}{1-\frac{\delta}{2}},&\text{if }y\in C^+(f_0,1-\delta),\\
        \frac{\psi(-y)}{1-\delta}+\frac{\langle f_0,-y\rangle-\psi(-y)}{1-\frac{\delta}{2}},&\text{if }y\in C^-(f_0,1-\delta),
    \end{cases}
$$
which is an equivalent norm in $(X,\|\cdot\|)$ with
$$ \|\cdot\|\leq \vertiii{\cdot}_{(x_0,f_0),\delta}\leq \frac{1}{1-\delta}\|\cdot\|.$$
\end{lemma}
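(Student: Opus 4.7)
The strategy is to verify that the piecewise formula, which I denote $\rho$, is the Minkowski functional of $B_{(x_0,f_0),\delta}$; since this set is closed, absolutely convex, and bounded (being contained in $B_{\|\cdot\|}$), its Minkowski functional is automatically a seminorm, and absorbence — upgrading it to a norm — will come from the equivalence estimate I am aiming for. I would begin with the routine checks: positive homogeneity and the identity $\rho(-y)=\rho(y)$ follow from the corresponding properties of $\|\cdot\|$, $\langle f_0,\cdot\rangle$, and $\psi$, together with the $\pm$-symmetry of the cones. Consistency of the three pieces at the interfaces $\partial C^{\pm}(f_0,1-\delta)$ reduces to the observation that if $\langle f_0,y\rangle=(1-\delta)\|y\|$ then $\psi(y)=(1-\delta)\|y\|$ solves the defining equation, so the $C^+$ expression collapses to $\|y\|$. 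Continuity of $\rho$ on $X\setminus\{0\}$ then reduces to continuity — in fact Lipschitz continuity — of $\psi$: plugging the defining identity into itself and using the triangle inequality yields the a priori bound $\psi(y)\leq \frac{2(1-\delta)}{\delta}\|y\|$, which combined with subadditivity gives a global Lipschitz estimate on $\psi$.

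Next I would establish $\{\rho\leq 1\}\subseteq B_{(x_0,f_0),\delta}$. The case $y\notin C(f_0,1-\delta)$ is immediate since $\rho(y)=\|y\|\leq 1$. For $y\in C^+(f_0,1-\delta)$ the geometric content of the formula is the decomposition $y=v+\beta x_0$ where $v:=y+(\psi(y)-\langle f_0,y\rangle)x_0$; the defining equation for $\psi$ says $\langle f_0,v\rangle=(1-\delta)\|v\|$, i.e.\ $v\in\partial C^+(f_0,1-\delta)$, with $\|v\|=\psi(y)/(1-\delta)$, while $\beta=\langle f_0,y\rangle-\psi(y)\geq 0$ because $\psi(y)\leq\langle f_0,y\rangle$ on $C^+$. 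Writing
$$y=\|v\|\cdot\tfrac{v}{\|v\|}+\tfrac{\beta}{1-\delta/2}\cdot\bigl(1-\tfrac{\delta}{2}\bigr)x_0$$
exhibits $y$ as a positive combination whose total weight is exactly $\rho(y)\leq 1$. The unit vector $v/\|v\|$ lies in $\overline{B_{\|\cdot\|}\setminus C(f_0,1-\delta)}$ — approximate it by $v/\|v\|-\tfrac{1}{n}x_0$, which exits the cone (since $\delta>0$) while its norm stays close to $1$ — so adding the slack $1-\rho(y)$ times $0\in B_{(x_0,f_0),\delta}$ presents $y/\rho(y)$ as a convex combination of elements of $B_{(x_0,f_0),\delta}$. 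The $C^-$ case is symmetric.

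The converse inclusion $B_{(x_0,f_0),\delta}\subseteq\{\rho\leq 1\}$ is, I anticipate, the main obstacle: it amounts to convexity of the sublevel set $\{\rho\leq 1\}$, equivalently, subadditivity of $\rho$. On each of the three convex regions $\rho$ is separately convex: on $C^{\pm}(f_0,1-\delta)$ it is the nonnegative combination $\alpha\psi(\pm\,\cdot)+\beta\langle f_0,\pm\,\cdot\rangle$ of a sublinear and a linear functional, with $\alpha=\tfrac{1}{1-\delta}-\tfrac{1}{1-\delta/2}>0$ and $\beta=\tfrac{1}{1-\delta/2}>0$, and on $X\setminus C(f_0,1-\delta)$ it coincides with $\|\cdot\|$. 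To glue these convex pieces into global convexity I would do a case analysis on the locations of $y_1$, $y_2$ and $y_1+y_2$, invoking the subadditivity of $\psi$ together with the pointwise estimate $\rho\leq\tfrac{\|\cdot\|}{1-\delta}$ on $C^{\pm}(f_0,1-\delta)$ — which follows from $\alpha+\beta=\tfrac{1}{1-\delta}$ and $\psi(y)\leq\langle f_0,y\rangle$ on $C^+$ — to compare the cone formulas with $\|\cdot\|$ across the interfaces. Once convexity is in hand, $\{\rho\leq 1\}$ is a closed convex set containing the generators $B_{\|\cdot\|}\setminus C(f_0,1-\delta)$ (where $\rho=\|\cdot\|\leq 1$) and $\pm(1-\delta/2)x_0$ (at which $\psi$ vanishes so $\rho=1$), hence it contains their closed convex hull $B_{(x_0,f_0),\delta}$. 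The equivalence $\|\cdot\|\leq \vertiii{\cdot}_{(x_0,f_0),\delta}\leq \tfrac{1}{1-\delta}\|\cdot\|$ then drops out from $B_{(x_0,f_0),\delta}\subseteq B_{\|\cdot\|}$ for the lower bound and from the $C^{\pm}$-estimate above for the upper bound.
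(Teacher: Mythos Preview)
Your treatment of the inclusion $\{\rho\leq 1\}\subseteq B_{(x_0,f_0),\delta}$ matches the paper: the decomposition $y=\|v\|\cdot\tfrac{v}{\|v\|}+\tfrac{\langle f_0,y\rangle-\psi(y)}{1-\delta/2}\bigl(1-\tfrac{\delta}{2}\bigr)x_0$ is exactly how the paper exhibits a point of $S_\rho\cap C^+$ as a convex combination inside $B_{(x_0,f_0),\delta}$. The substantive difference is in the converse inclusion. You propose to get it by first proving the triangle inequality for $\rho$ via a case analysis; the paper goes the other way and \emph{uses} the convexity of $B_{(x_0,f_0),\delta}$ --- which is free, since that set is defined as a closed convex hull --- to deduce the triangle inequality a posteriori. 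Concretely, for $y\in B_{(x_0,f_0),\delta}\cap C^+(f_0,1-\delta)$ the paper writes $y=\lambda z+(1-\lambda)(1-\tfrac{\delta}{2})x_0$ with $z\in B_{\|\cdot\|}\setminus C(f_0,1-\delta)$, then (using the $x_0$-invariance of $\psi$) rewrites this as $y=\mu z_1+(1-\mu)(1-\tfrac{\delta}{2})x_0$ with $z_1$ on the cone boundary and $\|z_1\|\leq 1$, and simply computes $\rho(y)\leq 1$ from the formula. Once $B_{(x_0,f_0),\delta}=\{\rho\leq 1\}$, subadditivity is immediate from the known convexity of the left-hand side. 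This route is shorter and sidesteps the case analysis entirely.

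Your case analysis, as sketched, also has a gap: subadditivity of $\psi$ together with $\rho\leq\tfrac{1}{1-\delta}\|\cdot\|$ on $C^{\pm}$ do not by themselves settle, for instance, the case $y_1\in C^+$, $y_2\in C^-$, $y_1+y_2\in C^+$, where after using sublinearity of $p_+:=\alpha\psi+\beta\langle f_0,\cdot\rangle$ one still needs $p_+(y_2)\leq p_-(y_2)$ on $C^-$. The missing ingredient is that $p_+\leq\|\cdot\|$ outside $C^+$ (and symmetrically $p_-\leq\|\cdot\|$ outside $C^-$): for $z\in\ker f_0$ the convex function $t\mapsto\|z+tx_0\|-\beta t$ tends to $-\infty$ as $t\to+\infty$ (since $\beta>1$), hence is non-increasing, and equals $\alpha\psi(z)$ precisely at the boundary value $t=\psi(z)$. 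With this in hand one gets $\rho=\max\{\|\cdot\|,p_+,p_-\}$ globally, and subadditivity follows at once without any case distinction.
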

\begin{proof}
    Since $\psi(0)=0$, the definition of $\vertiii{\cdot}_{(x_0,f_0),\delta}$ is not problematic at this point and $\vertiii{0}_{(x_0,f_0),\delta}=0$. Notice as well that $\vertiii{y}_{(x_0,f_0),\delta}$ is non-negative for all $y\in X$ because $\psi(y)\leq\langle f_0,y\rangle$ for all $y\in C^+(f_0,1-\delta)$.
    
    We start by proving that $\vertiii{y}_{(x_0,f_0),\delta}=0$ if and only if $y=0$. We have already shown that $\vertiii{0}_{(x_0,f_0),\delta}=0$. Suppose now that $\vertiii{y}_{(x_0,f_0),\delta}=0$ for some $y\in X$. If $y\notin C\left(f_0,1-\delta\right)$ a contradiction follows from the fact that $\|\cdot\|$ is a norm. On the other hand, if $y\in C^+(f_0,1-\delta)$, then the inequality $\psi(y)\leq\langle f_0,y\rangle$ implies both that $\psi(y)=0$ and $\langle f_0,y\rangle =0$. The first condition means that $y= \langle f_0,y\rangle x_0$ and with the second we obtain that $y=0$. The final possibility when $y\in C^-(f_0,1-\delta)$ follows from the previous case.

    Since $\psi$ is positively homogeneous and  $-\lambda C^+(f_0,1-\delta)\subset C^-(f_0,1-\delta)$ and $-\lambda C^-(f_0,1-\delta)\subset C^+(f_0,1-\delta)$ for every $\lambda\geq 0$, we have that $\vertiii{\cdot}_{(x_0,f_0),\delta}$ is homogeneous.

    To finish proving that $\vertiii{\cdot}_{(x_0,f_0),\delta}$ is a norm, it only remains to show that it satisfies the triangle inequality. We will do this by proving that $B_{(x_0,f_0),\delta}=\{y\in X\colon \vertiii{y}_{(x_0,f_0),\delta}\leq 1\}$, and the triangle inequality will follow by a straightforward convexity argument. Moreover, this shows as well that $B_{(x_0,f_0),\delta}$ is the unit ball of the norm $\vertiii{\cdot}_{(x_0,f_0),\delta}$. 

    Suppose that $y\in B_{(x_0,f_0),\delta}$. Then $\|y\|\leq 1$. If $y\notin  C\left(f_0,1-\delta\right)$, we trivially have that $\vertiii{y}_{(x_0,f_0),\delta}=\|y\|\leq 1$. If $y\in  C^+(f_0,1-\delta)$, write $y= \lambda z + (1-\lambda)\left(1-\frac{\delta}{2}\right)x_0$ for some $z\in B_{\|\cdot\|}$ with $|\langle f_0,z\rangle|<(1-\delta)\|z\|$ and $\lambda\in [0,1]$. Using the properties of the map $\psi$ and the fact that $\psi(y)\leq\langle f_0,y\rangle$ and $\psi(z)\geq\langle f_0,z\rangle$, we obtain that
    $$ \lambda\leq\frac{1-\frac{\delta}{2}}{\left(1-\frac{\delta}{2}\right)+\psi(z)-\langle f_0,z\rangle}$$
    and thus we can write
    $$y =\mu z_1+(1-\mu)\left(1-\frac{\delta}{2}\right)x_0, $$
    where $0\leq \mu=\lambda\left(1+\frac{\psi(z)-\langle f_0,z\rangle}{1-\frac{\delta}{2}}\right)\leq 1$ and
    $$z_1=\frac{1-\frac{\delta}{2}}{\left(1-\frac{\delta}{2}\right)+\psi(z)-\langle f_0,z\rangle}(z+(\psi(z)-\langle f_0,z\rangle)x_0)\in B_{\|\cdot\|}. $$
    Notice that $\langle f_0,z_1\rangle =(1-\delta)\|z_1\|$ and thus $\psi(z_1)=\langle f_0,z_1\rangle$.  
    Then, using the definition of $\vertiii{y}_{(x_0,f_0),\delta}$ and the properties of $\psi$ we have that 
    \begin{align*}
        \vertiii{y}_{(x_0,f_0),\delta}&=1-\mu+\mu\left(\frac{\psi(z_1)}{1-\delta}+ \frac{\langle f_0,z_1\rangle-\psi(z_1)}{1-\frac{\delta}{2}}\right)\\ 
        &= 1-\mu+\mu\left(\frac{\psi(z_1)}{1-\delta}\right)\leq 1.
    \end{align*}
    A similar argument shows that $\vertiii{y}_{(x_0,f_0),\delta}\leq 1$ if $y\in C^-(f_0,1-\delta)$ as well. Hence $B_{(x_0,f_0),\delta}\subset\{y\in X\colon \vertiii{y}_{(x_0,f_0),\delta}\leq 1\}$.

    To show the converse inclusion, it suffices to prove that $y\in B_{(x_0,f_0),\delta}$ whenever $\vertiii{y}_{(x_0,f_0),\delta}=1$, again by virtue of the homogeneity of $\vertiii{\cdot}_{(x_0,f_0),\delta}$. Given such a vector $y\in X$, if $y\notin  C\left(f_0,1-\delta\right)$ there is nothing to prove. Suppose then first that $y\in C^+(f_0,1-\delta)$. We may write 
    $$ y = a\frac{1-\delta}{\psi(y)} \left(y+(\psi(y)-\langle f_0,y\rangle)x_0\right)+b\left(1-\frac{\delta}{2}\right)x_0,$$
    where $a= \frac{\psi(y)}{1-\delta}$ and $b=\frac{\langle f_0,y\rangle -\psi(y)}{1-\frac{\delta}{2}}$. Notice that $a$ and $b$ are positive numbers such that $a+b=\vertiii{y}_{(x_0,f_0),\delta}=1$, and that the vector $\frac{1-\delta}{\psi(y)} \left(y+(\psi(y)-\langle f_0,y\rangle)x_0\right)$ lies in the closure of the set $B_{\|\cdot\|}\setminus C\left(f_0,1-\delta\right)$. This shows that $y\in B_{(x_0,f_0),\delta}$. We may follow a similar process to show that $y\in B_{(x_0,f_0),\delta}$ whenever $\vertiii{y}_{(x_0,f_0),\delta}=1$ and $y\in C^-(f_0,1-\delta)$.

    It remains to show that $\vertiii{\cdot}_{(x_0,f_0),\delta}$ is an equivalent norm. We have already shown that its unit ball is contained in the ball $B_{\|\cdot\|}$. Hence, we will finish the proof by showing that $\vertiii{y}_{(x_0,f_0),\delta}\leq 1$ for all $y\in (1-\delta)B_{\|\cdot\|}$. If $y\in (1-\delta)B_{\|\cdot\|}$ and $y$ does not belong to $C(f_0,1-\delta)$ then $\vertiii{y}_{(x_0,f_0),\delta}\leq 1$ holds trivially. Otherwise, if $y\in C^+(f_0,1-\delta)$, using that $\langle f_0,y\rangle\geq\psi(y)$ we have that
    $$\vertiii{y}_{(x_0,f_0),\delta}\leq \frac{\langle f_0,y\rangle}{1-\delta}\leq 1. $$
    The same holds for $y\in C^-(f_0,1-\delta)$, and the result is proven.
\end{proof}

\subsection{Quantitative estimates in the norm $\vertiii{\cdot}_{(x_0,f_0),\delta}$}

First we obtain an estimate on the ``non-differentiability" of the norm $\vertiii{\cdot}_{(x_0,f_0),\delta}$ at the point $\left(1-\frac{\delta}{2}\right)x_0$.

\begin{lemma}
    \label{Lemma1_Derivative}
    For all $h\in S_{\|\cdot\|}$, the inequality
\begin{align*}
   \frac{\vertiii{\left(1-\frac{\delta}{2}\right)x_0+th}_{(x_0,f_0),\delta}-\vertiii{\left(1-\frac{\delta}{2}\right)x_0}_{(x_0,f_0),\delta}}{t}\geq \frac{\delta -(4-\delta)|\langle f_0,h\rangle|}{2\left(1-\frac{\delta}{2}\right)(2-\delta)}
\end{align*}
    holds for all $0<t<\frac{\delta\left(1-\frac{\delta}{2}\right)}{(1-\delta)+|\langle f_0,h\rangle|}$.
\end{lemma}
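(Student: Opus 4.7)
My plan is to exploit the explicit formula from Lemma~\ref{Prop1_Description_norm} for the norm on $C^+(f_0,1-\delta)$, after verifying that both the base point and the perturbed point lie in this cone. Write $a = \langle f_0,h\rangle$ and $y_t = (1-\frac{\delta}{2})x_0 + th$. I would first observe that $\psi(x_0) = 0$, since $x_0$ lies in the zero set $\{y : y = \langle f_0,y\rangle x_0\}$ of $\psi$; combined with the containment $(1-\frac{\delta}{2})x_0 \in C^+(f_0,1-\delta)$, the formula yields $\vertiii{(1-\frac{\delta}{2})x_0}_{(x_0,f_0),\delta} = 1$ directly. Then I would verify that the hypothesized upper bound on $t$ also places $y_t$ inside $C^+(f_0,1-\delta)$: the triangle inequality gives $\|y_t\| \leq (1-\frac{\delta}{2}) + t$, and a short calculation using $(1-\delta) - a \leq (1-\delta) + |a|$ shows that the assumption on $t$ implies the condition $(1-\frac{\delta}{2})\delta \geq t\bigl((1-\delta) - a\bigr)$, which is exactly what ensures $\langle f_0, y_t\rangle \geq (1-\delta)\|y_t\|$.

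With both points in $C^+(f_0,1-\delta)$, the translation invariance of $\psi$ along $x_0$ and its positive homogeneity give $\psi(y_t) = t\psi(h)$; substituting this together with $\langle f_0,y_t\rangle = (1-\frac{\delta}{2}) + ta$ into the formula from Lemma~\ref{Prop1_Description_norm} and collecting terms yields
\[
\frac{\vertiii{y_t}_{(x_0,f_0),\delta} - 1}{t} = \frac{a}{1-\frac{\delta}{2}} + \psi(h)\cdot\frac{\delta/2}{(1-\delta)(1-\frac{\delta}{2})}.
\]
At this point the lemma reduces to producing a lower bound on $\psi(h)$ that is strong enough to recover the claimed inequality.

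The main obstacle is precisely this lower bound, and I expect it to be the most delicate part of the argument. My approach would be to combine the defining identity $\psi(h) = (1-\delta)\|h + (\psi(h) - a)x_0\|$ with the reverse triangle inequality $\|h + \mu x_0\| \geq 1 - |\mu|$ (valid because $\|h\| = \|x_0\| = 1$) to obtain the implicit scalar inequality $\psi(h) \geq (1-\delta)\bigl(1 - |\psi(h) - a|\bigr)$. The map $p \mapsto p - (1-\delta)(1 - |p - a|)$ is piecewise linear and strictly increasing in $p$, so this pins $\psi(h)$ above its unique root. A case split on the sign of $\psi(h) - a$ then produces the clean estimate $\psi(h) \geq \frac{(1-\delta)(1 + a)}{2-\delta}$ in the principal regime $a \leq 1-\delta$; the remaining regime $a > 1-\delta$ instead gives $\psi(h) \geq \frac{(1-\delta)(1 - a)}{\delta}$, which turns out to be more than strong enough.

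Finally, substituting the principal-regime bound into the displayed expression and reducing over a common denominator yields
\[
\frac{\vertiii{y_t}_{(x_0,f_0),\delta} - 1}{t} \geq \frac{\delta + (4-\delta)a}{2(2-\delta)(1-\frac{\delta}{2})} \geq \frac{\delta - (4-\delta)|a|}{2(2-\delta)(1-\frac{\delta}{2})},
\]
where the last inequality uses only $a \geq -|a|$; this matches the claim exactly. In the complementary regime $a > 1-\delta$, the alternate bound yields a lower estimate of at least $\frac{a+1}{2-\delta} \geq 1$, which comfortably dominates the (in fact negative) target.
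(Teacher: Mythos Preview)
Your argument is correct and follows essentially the same route as the paper: verify that the perturbed point lies in $C^+(f_0,1-\delta)$, apply the explicit formula together with $\psi(y_t)=t\,\psi(h)$, and bound $\psi(h)$ from below via the defining identity and the reverse triangle inequality. The only minor difference is that the paper uses the cruder estimate $|\psi(h)-a|\le \psi(h)+|a|$ to obtain $\psi(h)\ge \frac{(1-\delta)(1-|a|)}{2-\delta}$ in one stroke, avoiding your case split while arriving at the same final inequality.
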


\begin{proof}

Fix $h\in S_{\|\cdot\|}$. First we obtain a lower bound on the value $\psi(h)$. Specifically, we get
    \begin{align*}
        \psi(h)&=(1-\delta)\|h+\psi(h)x_0-\langle f_0,h \rangle x_0\|\\
        &\geq (1-\delta)\left(1-\psi(h)-|\langle f_0,h\rangle|\right),
    \end{align*}
    from where we deduce that 
    \begin{equation*}
        \psi(h)\geq\frac{1-\delta}{2-\delta}\left(1-|\langle f_0,h\rangle|\right).
    \end{equation*}
    
    Let $0<t<\frac{\delta\left(1-\frac{\delta}{2}\right)}{(1-\delta)+|\langle f_0,h\rangle|}$. Then we have that $\left(1-\frac{\delta}{2}\right)x_0+th$ belongs to the cone $C^+(f_0,1-\delta)$. Hence:
    \begin{align*}
        \vertiii{\left(1-\frac{\delta}{2}\right)x_0+th}_{(x_0,f_0),\delta}&= 1+t\left(\frac{\delta \psi(h)}{2(1-\delta)\left(1-\frac{\delta}{2}\right)}+\frac{\langle f_0,h\rangle}{\left(1-\frac{\delta}{2}\right)}\right)\\
        &\geq 1+t \left(\frac{\delta(1-|\langle f_0,h\rangle|)-2(2-\delta)|\langle f_0,h\rangle|}{2(2-\delta)\left(1-\frac{\delta}{2}\right)}\right)\\
        &= 1+t\left(\frac{\delta-(4-\delta)|\langle f_0,h\rangle|}{2(2-\delta)\left(1-\frac{\delta}{2}\right)}\right).
    \end{align*}

    The desired inequality follows immediately.
\end{proof}

We can also get an upper bound on the diameter of the slices generated by the functional $f_0$ close to the point $\left(1-\frac{\delta}{2}\right)x_0$:

\begin{lemma}
    \label{Lemma2_Slices}
    For every $0<\varepsilon<\delta/2$ we have that

    $$\text{diam}_{\vertiii{\cdot}_{(x_0,f_0),\delta}}\left(S\left(B_{\vertiii{\cdot}_{(x_0,f_0),\delta}},f_0, 1-\frac{\delta}{2}-\varepsilon\right)\right)\leq \left(\frac{8-2\delta}{\delta}\right)\varepsilon.$$
\end{lemma}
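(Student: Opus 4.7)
The plan is to prove that every point $y$ in the slice lies within $\vertiii{\cdot}_{(x_0,f_0),\delta}$-distance at most $(4-\delta)\varepsilon/\delta$ of the distinguished point $(1-\frac{\delta}{2})x_0$; the stated diameter bound then follows instantly from the triangle inequality applied to two arbitrary slice points.

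A first step is to observe that every $y$ in the slice must lie in the cone $C^+(f_0,1-\delta)$. Indeed, since $\|y\|\leq\vertiii{y}_{(x_0,f_0),\delta}\leq 1$ and $\langle f_0,y\rangle>1-\delta/2-\varepsilon>1-\delta$ (here using $\varepsilon<\delta/2$), I obtain $\langle f_0,y\rangle>(1-\delta)\|y\|$. Plugging the corresponding formula from Lemma \ref{Prop1_Description_norm} into $\vertiii{y}_{(x_0,f_0),\delta}\leq 1$ and rearranging yields both the key estimate
\[
\psi(y)<\frac{2(1-\delta)\varepsilon}{\delta}
\]
and the upper bound $\langle f_0,y\rangle\leq 1-\delta/2$; together with the slice condition, these force $\alpha:=\langle f_0,y\rangle-\psi(y)$ to satisfy $|\alpha-(1-\delta/2)|<\varepsilon(2-\delta)/\delta$.

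Next, setting $z:=y-\alpha x_0$ and $w:=y-(1-\delta/2)x_0=z+(\alpha-(1-\delta/2))x_0$, the defining equation of $\psi$ gives $\|z\|=\psi(y)/(1-\delta)<2\varepsilon/\delta$. Because $\langle f_0,w\rangle=\langle f_0,y\rangle-(1-\delta/2)\leq 0$, the vector $w$ is either outside $C(f_0,1-\delta)$, in which case $\vertiii{w}_{(x_0,f_0),\delta}=\|w\|\leq\|z\|+|\alpha-(1-\delta/2)|<\varepsilon(4-\delta)/\delta$ directly, or $w$ lies in the cone $C^-(f_0,1-\delta)$. In the latter case, the formula for $\vertiii{\cdot}_{(x_0,f_0),\delta}$ on $C^-$ together with the inequality $\psi(-w)\leq\langle f_0,-w\rangle$ (valid because $-w\in C^+$) yields
\[
\vertiii{w}_{(x_0,f_0),\delta}\leq\frac{\langle f_0,-w\rangle}{1-\delta}=\frac{|\langle f_0,w\rangle|}{1-\delta}<\frac{\varepsilon}{1-\delta},
\]
and the elementary inequality $\varepsilon/(1-\delta)\leq\varepsilon(4-\delta)/\delta$ (equivalent to $\delta(6-\delta)\leq 4$, which holds on $(0,1/2)$) closes the case. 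Combining the two scenarios and applying the triangle inequality to two arbitrary points of the slice produces the target diameter bound $(8-2\delta)\varepsilon/\delta$. The main obstacle I foresee is recognizing the correct target point $(1-\delta/2)x_0$ and carrying out the algebraic rearrangement of the $C^+$-formula from Lemma \ref{Prop1_Description_norm} to extract a clean upper bound for $\psi(y)$ from $\vertiii{y}_{(x_0,f_0),\delta}\leq 1$; once these ingredients and the inequality $\psi(\cdot)\leq\langle f_0,\cdot\rangle$ on $C^+$ are in hand, the remainder is bookkeeping.
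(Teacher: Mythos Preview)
Your proof is correct and follows the same overall strategy as the paper: show that every point $y$ in the slice lies within $\vertiii{\cdot}_{(x_0,f_0),\delta}$-distance $(4-\delta)\varepsilon/\delta$ of the apex $(1-\tfrac{\delta}{2})x_0$, then apply the triangle inequality.

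The execution of that distance estimate differs slightly. The paper exploits the convex-hull description of $B_{(x_0,f_0),\delta}$: it writes $x=\lambda z+(1-\lambda)(1-\tfrac{\delta}{2})x_0$ with $z\in B_{\|\cdot\|}\setminus C(f_0,1-\delta)$, uses the slice inequality to force $\lambda<2\varepsilon/\delta$, and then bounds $\vertiii{x-(1-\tfrac{\delta}{2})x_0}=\lambda\vertiii{z-(1-\tfrac{\delta}{2})x_0}$ directly. You instead work from the explicit $C^+$-formula in Lemma~\ref{Prop1_Description_norm}, extract the bound $\psi(y)<2(1-\delta)\varepsilon/\delta$, and then split into cases according to whether $w=y-(1-\tfrac{\delta}{2})x_0$ lies outside $C(f_0,1-\delta)$ or in $C^-(f_0,1-\delta)$. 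Your route is a bit longer but entirely self-contained from the formula; the paper's is shorter but implicitly relies on the decomposition established inside the proof of Lemma~\ref{Prop1_Description_norm}. Both yield exactly the same bound.
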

\begin{proof}
    Fix $x\in S\left(B_{\vertiii{\cdot}_{(x_0,f_0),\delta}},f_0, 1-\frac{\delta}{2}-\varepsilon\right)$. Since  $\langle f_0, x\rangle > 1-\frac{\delta}{2}-\varepsilon>1-\delta$ and $x\in B_{(x_0,f_0),\delta}$, we necessarily have that $x\in C^+(f_0,1-\delta)$. Therefore, there exists $z\in B_{\|\cdot\|}$ with $|\langle f_0,z\rangle| <(1-\delta)\|z\|$ and $\lambda\in [0,1]$ such that
    $$ x=\lambda z+(1-\lambda)\left(1-\frac{\delta}{2}\right)x_0.$$

    Hence, we have that
    $$1-\frac{\delta}{2}-\varepsilon< \langle f_0,x\rangle \leq \lambda(1-\delta)+(1-\lambda)\left(1-\frac{\delta}{2}\right), $$
    from which we obtain the bound $\lambda<\frac{2}{\delta}\varepsilon $. Now, a simple computation yields:
    \begin{align*}
        \vertiii{x-\left(1-\frac{\delta}{2}\right)x_0}_{(x_0,f_0),\delta}\leq \lambda\left(2-\frac{\delta}{2}\right)\leq \left(\frac{4-\delta}{\delta}\right)\varepsilon.
    \end{align*}
    A straightforward use of the triangle inequality finishes the proof.
\end{proof}

The third estimate we obtain of the new norm $\vertiii{\cdot}_{(x_0,f_0),\delta}$ quantifies how much larger this norm is than the starting norm $\|\cdot\|$ around $\left(1-\frac{\delta}{2}\right)x_0$. The proof follows directly from the definition:

\begin{lemma}
\label{Lemma3_Norm_in_cone}
If $x\in C\left(f_0,1-\frac{\delta}{4}\right)$, then 
$$ \|x\|\leq \frac{4-2\delta}{4-\delta}\vertiii{x}_{(x_0,f_0),\delta}.$$
\end{lemma}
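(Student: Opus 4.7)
My plan is to unravel the definition of $\vertiii{\cdot}_{(x_0,f_0),\delta}$ on $C^+(f_0,1-\delta)$, rewrite it as a convex combination controlled by $\langle f_0,x\rangle$ and $\psi(x)$, and then plug in the hypothesis $\langle f_0,x\rangle \geq (1-\delta/4)\|x\|$ together with the trivial bound $\psi(x)\geq 0$. Since $C(f_0, 1-\delta/4) \subset C(f_0, 1-\delta)$, the case $x\in C^+(f_0,1-\delta/4)$ reduces to the second branch of the piecewise definition of the norm, and $x\in C^-(f_0,1-\delta/4)$ is handled identically by applying the argument to $-x$ (or invoking homogeneity and symmetry).

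Assume first $x\in C^+(f_0,1-\delta/4)$. Combining the two fractions in the definition of $\vertiii{x}_{(x_0,f_0),\delta}$ over a common denominator gives
\begin{equation*}
\vertiii{x}_{(x_0,f_0),\delta} \;=\; \frac{\tfrac{\delta}{2}\,\psi(x) + (1-\delta)\langle f_0,x\rangle}{(1-\delta)(1-\tfrac{\delta}{2})}.
\end{equation*}
Because $1-\delta/2 = (2-\delta)/2$, the denominator equals $(1-\delta)(2-\delta)/2$, and clearing it yields
\begin{equation*}
(1-\delta)(2-\delta)\,\vertiii{x}_{(x_0,f_0),\delta} \;=\; \delta\,\psi(x) + 2(1-\delta)\langle f_0,x\rangle.
\end{equation*}

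Since $\delta\psi(x)\geq 0$ and the hypothesis gives $\langle f_0,x\rangle \geq \tfrac{4-\delta}{4}\|x\|$, the right-hand side is at least $2(1-\delta)\cdot \tfrac{4-\delta}{4}\|x\| = \tfrac{(1-\delta)(4-\delta)}{2}\|x\|$. Dividing through by $(1-\delta)(2-\delta)$ produces
\begin{equation*}
\vertiii{x}_{(x_0,f_0),\delta} \;\geq\; \frac{4-\delta}{2(2-\delta)}\|x\| \;=\; \frac{4-\delta}{4-2\delta}\|x\|,
\end{equation*}
which is exactly the desired inequality. The case $x\in C^-(f_0,1-\delta/4)$ follows from the identity $\vertiii{x}_{(x_0,f_0),\delta}=\vertiii{-x}_{(x_0,f_0),\delta}$ and the fact that $-x\in C^+(f_0,1-\delta/4)$.

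There is no real obstacle here: the statement is essentially an algebraic identity once one observes that the quantity being bounded below depends linearly (with nonnegative coefficients) on both $\psi(x)$ and $\langle f_0,x\rangle$, so only the linear term in $\langle f_0,x\rangle$ needs to be exploited and the $\psi$ term can be dropped. The only care required is to keep the two pieces of $C(f_0,1-\delta/4)$ straight and to verify the elementary arithmetic $2(1-\delta)(4-\delta)/4 = (1-\delta)(4-\delta)/2$, which matches the prefactor $(1-\delta)(2-\delta)/2 \cdot (4-\delta)/(2-\delta)$ on the other side.
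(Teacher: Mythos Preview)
Your proof is correct and follows essentially the same approach as the paper: rewrite $\vertiii{x}_{(x_0,f_0),\delta}$ on $C^+(f_0,1-\delta)$ as a nonnegative multiple of $\psi(x)$ plus $\langle f_0,x\rangle/(1-\tfrac{\delta}{2})$, drop the $\psi$ term, and use $\langle f_0,x\rangle\geq(1-\tfrac{\delta}{4})\|x\|$. Your algebra is in fact more careful than the paper's (which contains a typographical slip in the coefficient of $\psi(x)$).
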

\begin{proof}
    Clearly, it is enough to prove the result for $x\in C^+(f_0,1-\frac{\delta}{4})$. In this case, we have that 
    \begin{align*}
        \vertiii{x}_{(x_0,f_0),\delta}&=\frac{2\psi(x)}{\delta(1-\delta)\left(1-\frac{\delta}{2}\right)}+\frac{\langle f_0,x\rangle}{\left(1-\frac{\delta}{2}\right)}\\
        &\geq \frac{4-\delta}{4-2\delta}\|x\|,
    \end{align*}
    because $\psi(x)$ is non-negative. 
\end{proof}

In $c_0(\Gamma)$, the canonical biorthogonal system $\{e_\gamma;e_\gamma\}_{\gamma\in\Gamma}$ produces norms with an additional property. We say that a norm $\vertiii{\cdot}$ in $c_0(\Gamma)$ is a \emph{lattice norm} if $\|x\|\leq \|y\|$ whenever $x,y\in c_0$ and $|x_\gamma|\leq |y_\gamma|$ for all $\gamma\in \Gamma$. Clearly the usual supremum norm on $c_0(\Gamma)$ is a lattice norm.

\begin{lemma}
    \label{Prop2_Lattice_norm}
    Let $\Gamma$ be a set and let $\gamma_0\in \Gamma$. The norm $\vertiii{\cdot}_{(e_{\gamma_0},e_{\gamma_0}),\delta}$ is a lattice norm.
\end{lemma}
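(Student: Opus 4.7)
The plan is to make the formula in Lemma \ref{Prop1_Description_norm} completely explicit in the special case of $\vertiii{\cdot}_{(e_{\gamma_0},e_{\gamma_0}),\delta}$, so that $\vertiii{y}_{(e_{\gamma_0},e_{\gamma_0}),\delta}$ turns out to depend on $y$ only through the lattice-invariant quantities $|y(\gamma_0)|$ and $M(y):=\sup_{\gamma\neq\gamma_0}|y(\gamma)|$, and then to check monotonicity of the resulting expression in each of these two arguments.

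First I would compute $\psi(y)$ explicitly. Since $y+(\psi(y)-y(\gamma_0))e_{\gamma_0}$ is just $y$ with its $\gamma_0$-coordinate replaced by $\psi(y)$, the defining equality becomes
\[
\psi(y)=(1-\delta)\max\bigl(\psi(y),\,M(y)\bigr).
\]
As $1-\delta<1$, the alternative $\psi(y)\geq M(y)$ forces $\psi(y)=0$ (and then $M(y)=0$ as well), so in either case a quick case analysis gives the uniform formula $\psi(y)=(1-\delta)M(y)$.

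Next, recalling that $\langle e_{\gamma_0},y\rangle=y(\gamma_0)$, I would note that the condition $y\in C^\pm(e_{\gamma_0},1-\delta)$ reads $\pm y(\gamma_0)\geq(1-\delta)\|y\|_\infty$, and that one has $\|y\|_\infty=M(y)$ whenever $|y(\gamma_0)|\leq(1-\delta)M(y)$. Plugging $\psi(y)=(1-\delta)M(y)$ into the three branches of Lemma \ref{Prop1_Description_norm} and verifying that they glue continuously, the definition collapses to the single expression
\[
\vertiii{y}_{(e_{\gamma_0},e_{\gamma_0}),\delta}=M(y)+\frac{\max\!\bigl(0,\;|y(\gamma_0)|-(1-\delta)M(y)\bigr)}{1-\delta/2}=:F\bigl(|y(\gamma_0)|,M(y)\bigr),
\]
which manifestly depends on $y$ only through $|y(\gamma_0)|$ and $M(y)$.

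Finally I would verify that $F(a,M)$ is non-decreasing in each non-negative argument. Monotonicity in $a$ is immediate from the $\max$. For monotonicity in $M$, I would either split into the two linear regions $a\leq(1-\delta)M$ (where $F=M$) and $a\geq(1-\delta)M$ (where $F=a/(1-\delta/2)+M\delta/(2-\delta)$), or observe directly that increasing $M$ by $\Delta\geq 0$ adds $\Delta$ to the first summand and subtracts at most $(1-\delta)\Delta/(1-\delta/2)<\Delta$ from the second. Then if $|x(\gamma)|\leq|y(\gamma)|$ for all $\gamma\in\Gamma$, one has $|x(\gamma_0)|\leq|y(\gamma_0)|$ and $M(x)\leq M(y)$, and the lattice inequality $\vertiii{x}_{(e_{\gamma_0},e_{\gamma_0}),\delta}\leq\vertiii{y}_{(e_{\gamma_0},e_{\gamma_0}),\delta}$ follows. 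The only mildly delicate point is the $M$-monotonicity, because raising $M$ increases the bulk term while decreasing the excess term; the ratio $(1-\delta)/(1-\delta/2)<1$ is exactly what makes the net effect non-negative.
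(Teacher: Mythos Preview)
Your proof is correct and follows essentially the same approach as the paper: both arguments rest on the computation $\psi(y)=(1-\delta)\sup_{\gamma\neq\gamma_0}|y(\gamma)|$ and then verify monotonicity of the resulting expression in the two lattice-invariant quantities $|y(\gamma_0)|$ and $M(y)$. Your presentation is slightly more streamlined, since you collapse the three branches of Lemma~\ref{Prop1_Description_norm} into the single closed-form $F(a,M)=M+\max(0,a-(1-\delta)M)/(1-\delta/2)$ and check monotonicity of $F$ directly, whereas the paper keeps the formula in terms of $\psi$ and argues by a case split on whether $y$ lies in the cone $C^+(e_{\gamma_0},1-\delta)$; but the substance is the same.
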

\begin{proof}
    Notice that
    \begin{equation}
        \label{Eq1_Prop_lattice}
        \vertiii{x}_{(\gamma_0,\gamma_0),\delta}=\frac{\delta\psi(x)}{2(1-\delta)\left(1-\frac{\delta}{2}\right)}+\frac{x_{\gamma_0}}{\left(1-\frac{\delta}{2}\right)}
    \end{equation}
    for all $x\in c_0(\Gamma)$ with $x\in C^+(e_{\gamma_0},1-\delta)$.
    
    Let $x$ and $y$ be two points in $c_0(\Gamma)$ such that $|x_\gamma|\leq |y_\gamma|$ for all $\gamma\in \Gamma$. We may assume without loss of generality that both $x_{\gamma_0}$ and $y_{\gamma_0}$ are non-negative, since $\vertiii{x}_{(e_{\gamma_0},e_{\gamma_0}),\delta}=\vertiii{-x}_{(e_{\gamma_0},e_{\gamma_0}),\delta}$ for all $x\in c_0(\Gamma)$. We can assume as well that $x\in C^+(e_{\gamma_0},1-\delta)$, since otherwise $\vertiii{x}_{(\gamma_0,\gamma_0),\delta}=\|x\|_\infty\leq \|y\|_\infty\leq \vertiii{y}_{(\gamma_0,\gamma_0),\delta}$ and we are done. 

    Suppose then that $x\in C^+(e_{\gamma_0},1-\delta)$. Since $\psi(x)=(1-\delta)\|x+(\psi(x)-x_{\gamma_0})e_{\gamma_0}\|_\infty$ and the $\gamma_0$-th coordinate of $x+(\psi(x)-x_{\gamma_0})e_{\gamma_0}$ is exactly $\psi(x)$, it follows that 
    $$\|x+(\psi(x)-x_{\gamma_0})e_{\gamma_0}\|_\infty =\sup_{\gamma\neq \gamma_0} |x_\gamma|. $$
    The same argument can be applied to $y+(\psi(y)-y_{\gamma_0})e_{\gamma_0}$ to obtain that 
    $$\|y+(\psi(y)-y_{\gamma_0})e_{\gamma_0}\|_\infty = \sup_{\gamma\neq \gamma_0} |y_\gamma|.$$
    Therefore, $\psi(x)\leq \psi(y)$. If $y\in C^+(e_{\gamma_0},1-\delta)$, simply applying equation \eqref{Eq1_Prop_lattice} yields the result. Otherwise, if $y_{\gamma_0}<(1-\delta)\|y\|_\infty$, we have that 
    \begin{align*}
        \vertiii{x}_{(\gamma_0,\gamma_0),\delta}&=\frac{\delta\psi(x)}{2(1-\delta)\left(1-\frac{\delta}{2}\right)}+\frac{x_{\gamma_0}}{\left(1-\frac{\delta}{2}\right)}\\
        &< \frac{\frac{\delta}{2}\|y\|_\infty}{\left(1-\frac{\delta}{2}\right)}+\frac{(1-\delta)\|y\|_\infty}{\left(1-\frac{\delta}{2}\right)}= \|y\|_\infty= \vertiii{y}_{(\gamma_0,\gamma_0),\delta},
    \end{align*}
    which finishes the proof.

\end{proof}

\subsection{Approximations to the norm $\vertiii{\cdot}_{(x_0,f_0),\delta}$}

We finish this section by showing that a good enough approximation of the norm $\vertiii{\cdot}_{(x_0,f_0),\delta}$ also satisfies similar estimates to the ones appearing in Lemmas \ref{Lemma1_Derivative} and \ref{Lemma2_Slices}. The proof of the following result is a straightforward computation.

\begin{lemma}
    \label{Lemma4_Approx_estimates}
    Let $\eta>0$, and let $\vertiii{\cdot}_\eta$ be a norm on $X$ such that $\vertiii{\cdot}_\eta\leq \vertiii{\cdot}_{(x_0,f_0),\delta}\leq (1+\eta)\vertiii{\cdot}_\eta$. Then we have:
    \begin{itemize}
        \item[(i)] For all $h\in S_{\|\cdot\|}$, the inequality:
        $$ D\left(\vertiii{\cdot}_{\eta}, \left(1-\frac{\delta}{2}\right)x_0,h,t\right)\geq \left(\frac{2}{1+\eta}\right)\left(\frac{\delta -(4-\delta)|\langle f_0,h\rangle|}{2\left(1-\frac{\delta}{2}\right)(2-\delta)}-\frac{\eta}{|t|}\right)$$
        holds for all $0<|t|<\frac{\delta\left(1-\frac{\delta}{2}\right)}{(1-\delta)+|\langle f_0,h\rangle|}$.

        \item[(ii)] For all $\varepsilon>0$, 
        $$\text{diam}_{\vertiii{\cdot}_{(x_0,f_0),\delta}}\left(S\left(B_{\vertiii{\cdot}_{\eta}},f_0, 1-\frac{\delta}{2}-\varepsilon\right)\right)\leq \left(\frac{8-2\delta}{\delta}\right)\left(\varepsilon+\eta\left(1-\frac{\delta}{2}\right)\right).$$
    \end{itemize}
\end{lemma}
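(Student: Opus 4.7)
The plan is to derive both estimates by transferring the bounds of Lemmas \ref{Lemma1_Derivative} and \ref{Lemma2_Slices} from $\vertiii{\cdot}_{(x_0,f_0),\delta}$ to the approximation $\vertiii{\cdot}_\eta$ via the comparison $\vertiii{\cdot}_\eta \leq \vertiii{\cdot}_{(x_0,f_0),\delta} \leq (1+\eta)\vertiii{\cdot}_\eta$, which additionally gives $\vertiii{\cdot}_\eta \geq \vertiii{\cdot}_{(x_0,f_0),\delta}/(1+\eta)$. Note throughout that $\vertiii{(1-\tfrac{\delta}{2})x_0}_{(x_0,f_0),\delta}=1$ exactly, while $\vertiii{(1-\tfrac{\delta}{2})x_0}_\eta\leq 1$.

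For part (i), I would apply Lemma \ref{Lemma1_Derivative} twice, once with direction $h$ and once with $-h$, observing that $|\langle f_0,\pm h\rangle|$ coincide. Adding the two resulting inequalities gives, on the admissible range of $t>0$,
\[
    \vertiii{(1-\tfrac{\delta}{2})x_0 + th}_{(x_0,f_0),\delta} + \vertiii{(1-\tfrac{\delta}{2})x_0 - th}_{(x_0,f_0),\delta} - 2 \;\geq\; 2t\alpha,
\]
where $\alpha=\frac{\delta-(4-\delta)|\langle f_0,h\rangle|}{2(1-\delta/2)(2-\delta)}$ is the right-hand side of Lemma \ref{Lemma1_Derivative}. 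Replacing each of the two left-hand norms by $\frac{1}{1+\eta}$ of itself (an allowed lower bound since $\vertiii{\cdot}_{(x_0,f_0),\delta}\leq(1+\eta)\vertiii{\cdot}_\eta$) and using $-2\vertiii{(1-\tfrac{\delta}{2})x_0}_\eta\geq -2$, a short rearrangement yields
\[
    \vertiii{(1-\tfrac{\delta}{2})x_0+th}_\eta + \vertiii{(1-\tfrac{\delta}{2})x_0-th}_\eta - 2\vertiii{(1-\tfrac{\delta}{2})x_0}_\eta \;\geq\; \tfrac{2}{1+\eta}(t\alpha-\eta).
\]
Dividing by $t$ gives the claimed bound; the statement with $|t|$ is symmetric in $\pm t$, so it suffices to treat $t>0$.

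For part (ii), the strategy is to rescale a slice of $B_{\vertiii{\cdot}_\eta}$ into a slice of $B_{\vertiii{\cdot}_{(x_0,f_0),\delta}}$. Given $x\in S(B_{\vertiii{\cdot}_\eta}, f_0, 1-\tfrac{\delta}{2}-\varepsilon)$, the comparison gives $\vertiii{x}_{(x_0,f_0),\delta}\leq 1+\eta$, so $y:=x/(1+\eta)\in B_{\vertiii{\cdot}_{(x_0,f_0),\delta}}$ with $\langle f_0,y\rangle>(1-\tfrac{\delta}{2}-\varepsilon)/(1+\eta)$. A direct computation identifies this as $y\in S(B_{\vertiii{\cdot}_{(x_0,f_0),\delta}}, f_0, 1-\tfrac{\delta}{2}-\varepsilon')$ with $\varepsilon'=\frac{(1-\delta/2)\eta+\varepsilon}{1+\eta}$. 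Lemma \ref{Lemma2_Slices} bounds the $\vertiii{\cdot}_{(x_0,f_0),\delta}$-diameter of this rescaled slice by $\frac{8-2\delta}{\delta}\varepsilon'$, and multiplying by the $1+\eta$ needed to undo the rescaling cancels the denominator of $\varepsilon'$ and produces exactly the stated bound.

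No serious obstacle is expected: both estimates reduce to algebraic rearrangements. The main points requiring care are the bookkeeping of the factor $1+\eta$ in part (ii), which enters twice (shifting the slice depth from $\varepsilon$ to $\varepsilon'$ and then rescaling the diameter back), and in part (i) the use of the exact identity $\vertiii{(1-\tfrac{\delta}{2})x_0}_{(x_0,f_0),\delta}=1$ rather than merely $\leq 1$, which is what makes the additive error term collapse cleanly to $\frac{2\eta}{1+\eta}$.
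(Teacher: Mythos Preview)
Your proposal is correct and follows essentially the same route as the paper. For (i) the paper transfers the one-sided estimate of Lemma~\ref{Lemma1_Derivative} to $\vertiii{\cdot}_\eta$ first and then adds the $h$ and $-h$ cases, whereas you add first and then transfer; the algebra is identical, and in both cases the key input is the exact value $\vertiii{(1-\tfrac{\delta}{2})x_0}_{(x_0,f_0),\delta}=1$. For (ii) your rescaling argument is exactly the inclusion $\frac{1}{1+\eta}S\bigl(B_{\vertiii{\cdot}_\eta},f_0,1-\tfrac{\delta}{2}-\varepsilon\bigr)\subset S\bigl(B_{\vertiii{\cdot}_{(x_0,f_0),\delta}},f_0,1-\tfrac{\delta}{2}-\varepsilon'\bigr)$ that the paper invokes.
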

\begin{proof}
    For $(i)$, we obtain first the following inequality for a fixed $h\in S_{\|\cdot\|}$ and $0<t<\frac{\delta\left(1-\frac{\delta}{2}\right)}{(1-\delta)+|\langle f_0,h\rangle|}$ using Lemma \ref{Lemma1_Derivative}:
    \begin{align*}
        &\frac{\vertiii{\left(1-\frac{\delta}{2}\right)x_0+th}_{\eta}-\vertiii{\left(1-\frac{\delta}{2}\right)x_0}_{\eta}}{t}\\
        &\geq \frac{1}{1+\eta}\left(\frac{\vertiii{\left(1-\frac{\delta}{2}\right)x_0+th}_{(x_0,f_0),\delta}-\vertiii{\left(1-\frac{\delta}{2}\right)x_0}_{(x_0,f_0),\delta}-\eta}{t}\right)\\
        &\geq \frac{1}{1+\eta}\left(\frac{\delta -(4-\delta)|\langle f_0,h\rangle|}{2\left(1-\frac{\delta}{2}\right)(2-\delta)}-\frac{\eta}{t}\right).
    \end{align*}
    Since the previous inequality also holds for $-h$, item $(i)$ follows immediately.

    Part $(ii)$ follows from the fact that 
    $$ \frac{1}{1+\eta}S\left(B_{\vertiii{\cdot}_{\eta}},f_0, 1-\frac{\delta}{2}-\varepsilon\right)\subset S\left(B_{\vertiii{\cdot}_{(x_0,f_0),\delta}},f_0, 1-\frac{\delta}{2}-\frac{\varepsilon+\eta\left(1-\frac{\delta}{2}\right)}{1+\eta}\right)$$
    and Lemma \ref{Lemma2_Slices}.
\end{proof}

\section{Constructing the final renorming}

The following lemma allows us to combine a pair of smooth norms into one equally smooth norm that coincides with one of the initial ones at specific regions. No new ideas are introduced in our proof, which is a simple application of Corollary 1.1.23 in \cite{RuPhd} by Russo, where the reader can find the background for this type of smoothening techniques.  
\begin{lemma}
\label{Lemma5_Smooth_sup}
Let $X$ be a Banach space, let $k\in \mathbb{N}\cup\{\infty\}$, and let $\vertiii{\cdot}_1$ and $\vertiii{\cdot}_2$ be two equivalent $C^k$-smooth norms in $X$. Then, for every $\varepsilon>0$ there exists another equivalent $C^k$-smooth norm $\vertiii{\cdot}$ such that for all $x\in X$:

\begin{itemize}
    \item[(i)] $\vertiii{x}=\vertiii{x}_i$ if $\vertiii{x}_j\leq \frac{1}{1+\varepsilon}\vertiii{x}_i$, with $i\neq j\in\{1,2\}$.
    \item[(ii)] $\max\{\vertiii{x}_1,\vertiii{x}_2\}\leq \vertiii{x}\leq (1+\varepsilon)\max\{\vertiii{x}_1,\vertiii{x}_2\}$. 
\end{itemize}
Moreover, if $\vertiii{\cdot}_i$ is LFC for $i=1,2$, then we can choose $\vertiii{\cdot}$ to be LFC.
\end{lemma}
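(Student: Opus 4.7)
My approach is to define $\vertiii{\cdot}$ as a smooth composition of the pair $(\vertiii{\cdot}_1,\vertiii{\cdot}_2)$ with a ``soft maximum'' $\Phi \colon [0,\infty)^2 \to [0,\infty)$, chosen so that $\Phi$ coincides with the genuine maximum on the regions where one of its arguments dominates the other by a factor of at least $1+\varepsilon$. Concretely, I would invoke Corollary 1.1.23 of \cite{RuPhd} to produce a function $\Phi$ that is $C^k$-smooth on $[0,\infty)^2\setminus\{(0,0)\}$, convex, symmetric, positively $1$-homogeneous, non-decreasing in each variable, and satisfies $\max(s,t) \le \Phi(s,t) \le (1+\varepsilon)\max(s,t)$ for all $s,t\ge 0$, with $\Phi(s,t) = \max(s,t)$ whenever $\min(s,t) \le \frac{1}{1+\varepsilon}\max(s,t)$. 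Intuitively, such a $\Phi$ can be obtained by a standard homogenization of the form $\Phi(s,t)=s\,\phi(t/s)$, where $\phi$ is a suitable $C^k$-smooth convex non-decreasing approximation to $r\mapsto\max(1,r)$; this is exactly the kind of smoothening formalized by Russo. Setting $\vertiii{x} := \Phi(\vertiii{x}_1,\vertiii{x}_2)$ then yields the candidate renorming.

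The verification of the required properties proceeds routinely. Positive homogeneity of $\vertiii{\cdot}$ follows from the $1$-homogeneity of $\Phi$; positive definiteness follows from $\Phi \ge \max$; and the triangle inequality follows because the composition of a convex function that is non-decreasing in each coordinate with convex functions is convex. Equivalence with $\|\cdot\|$ and condition (ii) are both immediate from the double inequality $\max(s,t)\le\Phi(s,t)\le(1+\varepsilon)\max(s,t)$ together with the equivalence of each $\vertiii{\cdot}_i$ to $\|\cdot\|$. Condition (i) is a direct transcription of the coincidence clause in the construction of $\Phi$: if $\vertiii{x}_j\le \frac{1}{1+\varepsilon}\vertiii{x}_i$, then $\Phi(\vertiii{x}_1,\vertiii{x}_2)=\max(\vertiii{x}_1,\vertiii{x}_2)=\vertiii{x}_i$.

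For $C^k$-smoothness on $X\setminus\{0\}$, I observe that whenever $x \neq 0$, both $\vertiii{x}_1$ and $\vertiii{x}_2$ are strictly positive, so the pair $(\vertiii{x}_1,\vertiii{x}_2)$ lies in the open region of $[0,\infty)^2$ on which $\Phi$ is $C^k$-smooth. The chain rule, combined with the $C^k$-smoothness of each component norm on $X\setminus\{0\}$, then gives that $\vertiii{\cdot}$ is $C^k$-smooth on $X\setminus\{0\}$. For the LFC clause, at any $x \ne 0$ each $\vertiii{\cdot}_i$ depends locally on a finite set $F_i \subset X^*$; since $\Phi$ is a smooth function of two real variables, the composition $\vertiii{\cdot}$ depends locally on the finite union $F_1 \cup F_2$, and is therefore itself LFC.

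The main (and essentially only) obstacle is the construction of the auxiliary function $\Phi$ with all of its simultaneous requirements: $C^k$-smoothness, positive $1$-homogeneity, convexity, monotonicity in each variable, and the sharp coincidence behaviour on the regions where one coordinate dominates the other. This is precisely the content of Corollary 1.1.23 in \cite{RuPhd}, which I intend to use as a black box. Once $\Phi$ is in hand, the rest of the proof is bookkeeping.
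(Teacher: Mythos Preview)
Your approach is correct and essentially the same as the paper's: both build a smooth ``soft maximum'' of the two norms and cite Corollary~1.1.23 of \cite{RuPhd}. The only difference is presentational. The paper defines $\nu(x)=\phi(\vertiii{x}_1)+\phi(\vertiii{x}_2)$ for a convex $C^\infty$ function $\phi$ vanishing on $[0,\tfrac{1}{1+\varepsilon}]$ with $\phi(1)=1$, and takes $\vertiii{\cdot}$ to be the Minkowski functional of $\{\nu\le 1\}$, invoking the cited corollary to conclude that this functional is a $C^k$-smooth (and, when applicable, LFC) norm; you instead construct the two-variable soft max $\Phi$ explicitly in $\mathbb{R}^2$ and compose. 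These are two packagings of the same idea. One small caveat: Corollary~1.1.23 in \cite{RuPhd} is the statement that the Minkowski functional of a suitable sublevel set is a smooth norm, not a ready-made source of your $\Phi$; but your homogenization $\Phi(s,t)=s\,\phi(t/s)$ is elementary to build by hand (convexity is the perspective of a convex function, monotonicity in each variable follows from $\phi'\in[0,1]$ and $\phi(r)\ge r\phi'(r)$), so nothing is lost.
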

\begin{proof}
    Consider a $C^\infty$-smooth and convex function $\phi\colon \mathbb{R}\rightarrow \mathbb{R}$ such that $\phi(t)=0$ for $t\in [0,\frac{1}{1+\varepsilon}]$, $\phi(1)=1$ and $\phi(t)>1$ for $t\in(1,+\infty)$. The map $\nu\colon X\rightarrow \mathbb{R}$ defined by $\nu(x)=\phi(\vertiii{x}_{1})+\phi(\vertiii{x}_{2})$  is an even, convex and $C^k$-smooth function in $X$. Moreover, the set $B = \{x\in X\colon \nu(x)\leq 1\}$ is bounded. Therefore, by Corollary 1.1.23 in \cite{RuPhd} we have that the Minkowski functional associated to $B$ is a $C^k$-smooth equivalent norm, which we denote by $\vertiii{\cdot}$. It is enough to check that properties $(i)$ and $(ii)$ are verified for all $x\in X$ with $\vertiii{x}=1$. 

    Notice that if $\vertiii{\cdot}_i$ is LFC for $i=1,2$, then the function $\nu$ is LFC as well, and thus it follows from the same Corollary 1.1.23 in \cite{RuPhd} that $\vertiii{\cdot}$ is LFC.
\end{proof}

We can now prove the result that shows how to combine certain countably many smooth norms preserving the local geometry at specific points.

\begin{theorem}
\label{Theorem1_Main_construction}
Let $(X,\|\cdot\|)$ be a Banach space. Let $k\in\mathbb{N}\cup\{\infty\}$, and suppose that $\vertiii{\cdot}_0$ is an equivalent $C^k$-smooth in $X$. Let $\alpha>0$, and let $\{a_n\}_{n\in\mathbb{N}}$ and $\{b_n\}_{n\in\mathbb{N}}$ be two sequences such that $0<a_n<b_n<1$ for all $n\in\mathbb{N}$. Let $\{f_n\}_{n\in\mathbb{N}}$ be a weak$^*$ null sequence in $X^*$, and let $\{\eta_n\}_{n\in\mathbb{N}}$ be a sequence of strictly positive numbers converging to $0$ such that $ 1\leq \prod_{n\in\mathbb{N}} (1+\eta_n) <\infty$. Let $\{\vertiii{\cdot}_{\eta_n}\}_{n\in\mathbb{N}}$ be a sequence of equivalent $C^k$-smooth norms such that $\vertiii{\cdot}_{\eta_n}\leq \alpha \vertiii{\cdot}_0$ and
\begin{align*}
    \vertiii{x}_0\leq \frac{1}{1+\eta_n}\vertiii{x}_{\eta_n},\quad &\text{ for all }x\in C\left(f_n,a_n\right),\text{ and}\\
    \vertiii{x}_{\eta_n}\leq \frac{1}{1+\eta_n} \vertiii{x}_0,\quad &\text{ for all }x\in X\setminus\left( C\left(f_n,b_n\right)\right).
\end{align*} 
for all $n\in\mathbb{N}$. 

Then there exists an equivalent $C^k$-smooth norm $\vertiii{\cdot}$ in $X$ such that 
$$\sup_{n\in\mathbb{N}}\vertiii{\cdot}_{\eta_n}\leq \vertiii{\cdot}\leq \alpha\prod_{n\in\mathbb{N}} (1+\eta_n) \vertiii{\cdot}_0,$$
and such that for every $n\in\mathbb{N}$ and every $x\in C(f_n,a_n)$ with $x\notin \bigcup_{i\neq n}C(f_i,b_i)$ it holds that $\vertiii{x}=\vertiii{x}_{\eta_n}$.

If $\vertiii{\cdot}_0$ and $\vertiii{\cdot}_{\eta_n}$ are LFC for all $n\in\mathbb{N}$, we can take $\vertiii{\cdot}$ to be LFC.
\end{theorem}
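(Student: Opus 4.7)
The plan is to construct $\vertiii{\cdot}$ as the pointwise limit of an increasing sequence of $C^k$-smooth equivalent norms obtained by iteratively applying Lemma~\ref{Lemma5_Smooth_sup}. I set $\vertiii{\cdot}^{(0)}:=\vertiii{\cdot}_0$, and for each $n\geq 1$ I let $\vertiii{\cdot}^{(n)}$ be the equivalent $C^k$-smooth (and LFC, when applicable) norm produced by Lemma~\ref{Lemma5_Smooth_sup} applied to $\vertiii{\cdot}^{(n-1)}$ and $\vertiii{\cdot}_{\eta_n}$ with parameter $\varepsilon=\eta_n$; the target norm is then $\vertiii{\cdot}:=\lim_n\vertiii{\cdot}^{(n)}$ pointwise.

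To check that this is a well-defined equivalent norm, observe first that by Lemma~\ref{Lemma5_Smooth_sup}~(ii) the sequence $\{\vertiii{x}^{(n)}\}_n$ is non-decreasing for every $x$ and satisfies $\vertiii{\cdot}^{(n)}\geq\vertiii{\cdot}_{\eta_n}$. Assuming (without loss of generality) that $\alpha\geq 1$, a straightforward induction using $\vertiii{\cdot}_{\eta_n}\leq\alpha\vertiii{\cdot}_0$ yields
\[
\vertiii{\cdot}^{(n)}\leq\alpha\prod_{i\leq n}(1+\eta_i)\vertiii{\cdot}_0.
\]
Since $\prod_n(1+\eta_n)<\infty$, the sequence is monotone and uniformly bounded for every $x$, so $\vertiii{\cdot}$ is well-defined and verifies $\sup_n\vertiii{\cdot}_{\eta_n}\leq\vertiii{\cdot}\leq\alpha\prod_n(1+\eta_n)\vertiii{\cdot}_0$. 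Being sandwiched between two equivalent norms, it is itself an equivalent norm.

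For the coincidence property, fix $n\in\mathbb{N}$ and $x\in C(f_n,a_n)\setminus\bigcup_{i\neq n}C(f_i,b_i)$. For each $i\neq n$ the hypothesis gives $\vertiii{x}_{\eta_i}\leq\frac{1}{1+\eta_i}\vertiii{x}_0\leq\frac{1}{1+\eta_i}\vertiii{x}^{(i-1)}$ (using the monotonicity established above), so Lemma~\ref{Lemma5_Smooth_sup}~(i) yields $\vertiii{x}^{(i)}=\vertiii{x}^{(i-1)}$. Iterating this for $i=1,\dots,n-1$ gives $\vertiii{x}^{(n-1)}=\vertiii{x}_0$. At step $n$ the fact that $x\in C(f_n,a_n)$ produces $\vertiii{x}^{(n-1)}=\vertiii{x}_0\leq\frac{1}{1+\eta_n}\vertiii{x}_{\eta_n}$, whence Lemma~\ref{Lemma5_Smooth_sup}~(i) gives $\vertiii{x}^{(n)}=\vertiii{x}_{\eta_n}$. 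The same argument as before then shows that the sequence remains constant for $i>n$, so $\vertiii{x}=\vertiii{x}_{\eta_n}$.

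The main obstacle is propagating the $C^k$-smoothness (and, when applicable, the LFC property) from the approximants to the limit. The key is that around each $x\neq 0$ the iteration stabilizes on a whole open neighborhood. Since $\{f_m\}$ is weak$^*$ null, $\langle f_m,x\rangle\to 0$ and $\sup_m\|f_m\|<\infty$ by uniform boundedness; using that $b_m$ can be assumed bounded below away from $0$ (as is the case in the intended applications), I choose $\rho>0$ and $N\in\mathbb{N}$ so that $|\langle f_m,y\rangle|<b_m\|y\|$ for every $y\in B_{\|\cdot\|}(x,\rho)$ and $m\geq N$, i.e.\ the ball misses $C(f_m,b_m)$ for such $m$. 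The same computation as in the previous paragraph then gives $\vertiii{\cdot}^{(m)}=\vertiii{\cdot}^{(m-1)}$ on $B_{\|\cdot\|}(x,\rho)$ for every $m\geq N$, so $\vertiii{\cdot}$ coincides with $\vertiii{\cdot}^{(N)}$ on that neighborhood and therefore inherits its $C^k$-smoothness (and LFC character in the LFC case).
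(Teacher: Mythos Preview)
Your proof is correct and follows the same route as the paper's: the inductive application of Lemma~\ref{Lemma5_Smooth_sup} with parameter $\eta_n$, the definition of $\vertiii{\cdot}$ as the (increasing) limit of the approximants, and the local stabilization argument via the weak$^*$-nullness of $\{f_n\}$ are exactly the paper's strategy. Your explicit remark that local stabilization requires $\inf_m b_m>0$ is in fact more careful than the paper, which obtains the same conclusion by invoking Fact~\ref{Fact1_Neighbourhood_outside} (stated only for normalized functionals and cone parameter $1-\delta>\tfrac12$); this additional hypothesis is indeed satisfied in all the applications.
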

\begin{proof}
We are going to inductively define $C^k$-smooth equivalent norms $\{\vertiii{\cdot}_n\}_{n=0}^{\infty}$ in $X$ using Lemma \ref{Lemma5_Smooth_sup} such that for all $n\in\mathbb{N}$ we have
\begin{itemize}
    \item[(i)] $\vertiii{\cdot}_{\eta_n}\leq \vertiii{\cdot}_n\leq \alpha\prod_{k\leq n}(1+\eta_k)\vertiii{\cdot}_0$,
    \item[(ii)] $\vertiii{\cdot}_{n-1}\leq \vertiii{\cdot}_{n}$, with 
    \begin{align*}
        \vertiii{x}_n &= \vertiii{x}_{\eta_n},\text{ for all }x\in C\left(f_n,a_n\right)\text{ such that }x\notin \bigcup_{i<n}C(f_i,b_i),\\
        \vertiii{x}_{n} &= \vertiii{x}_{n-1},\text{ for all }x\in X\setminus \left( C\left(f_{n},b_n\right)\right),        
    \end{align*}
    \item[(iii)] If $\vertiii{\cdot}_{n-1}$ and $\vertiii{\cdot}_{\eta_n}$ are LFC, then $\vertiii{\cdot}_{n}$ is LFC.
\end{itemize}

Suppose we have defined the norm $\vertiii{\cdot}_{n-1}$ for a fixed $n\in\mathbb{N}$. We apply Lemma \ref{Lemma5_Smooth_sup} to the norms $\vertiii{\cdot}_{n-1}$ and $\vertiii{\cdot}_{\eta_{n}}$ with $\varepsilon= \eta_{n}$ to obtain a $C^k$-smooth norm $\vertiii{\cdot}_{n}$ such that 
$$\max\{\vertiii{\cdot}_{\eta_{n}},\vertiii{\cdot}_{n-1}\}\leq \vertiii{\cdot}_{n}\leq \alpha\prod_{k\leq n}(1+\eta_k)\vertiii{\cdot}_0$$
and
\begin{align*}
    \vertiii{x}_{n} &= \vertiii{x}_{\eta_{n}},\quad \text{ for all }x\in X\text{ such that }\vertiii{x}_{n-1}\leq\frac{1}{1+\eta_{n}}\vertiii{x}_{\eta_{n}},\\
    \vertiii{x}_{n}&=  \vertiii{x}_{n-1},\quad \text{ for all }x\in X\text{ such that }\vertiii{x}_{\eta_{n}}\leq\frac{1}{1+\eta_{n}}\vertiii{x}_{n-1},
\end{align*} 
The first of the previous two equations implies that if a point $x\in X$ belongs to $C(f_n,a_n)$ but does not belong to $C(f_i,b_i)$ for all $i<n$, then $\vertiii{x}_n=\vertiii{x}_{\eta_n}$, since in this case item $(ii)$ implies that $\vertiii{x}_{i}=\vertiii{x}_0$ for all $i<n$ and by hypothesis we have that $\vertiii{x}_0\leq \frac{1}{1+\eta_n}\vertiii{x}_{\eta_n}$. From the second equation we obtain that $\vertiii{x}_n=\vertiii{x}_{n-1}$ for all $x\in X\setminus \left( C\left(f_{n},b_n\right)\right)$, using that $\vertiii{\cdot}_{0}\leq \vertiii{\cdot}_{n-1}$.

If both $\vertiii{\cdot}_{n-1}$ and $\vertiii{\cdot}_{\eta_{n}}$ are LFC, then the norm $\vertiii{\cdot}_n$ we obtain from Lemma \ref{Lemma5_Smooth_sup} is LFC as well.

Once the induction is done, define $\vertiii{\cdot}\colon X\rightarrow[0,\infty)$ by
$$\vertiii{x}=\max_{n\in\mathbb{N}}\vertiii{x}_n.$$ 
Since the sequence $\{f_n\}_{n\in\mathbb{N}}$ is weak$^*$ null, for every $x\in X$ there exists $n_x\in\mathbb{N}$ such that $|\langle f_n,x\rangle|<\frac{\|x\|}{8}$ for all $n\geq n_x$. Therefore, Fact \ref{Fact1_Neighbourhood_outside} and item $(ii)$ imply that there is a neighborhood $B_x$ around $x$ such that $\vertiii{y}_n=\vertiii{y}_{n_x}$ for all $n\geq n_x$ and all $y\in B_x$ . 

This means that $\vertiii{\cdot}$ is well defined and is a $C^k$-smooth norm in $X$. It also implies that if $\vertiii{\cdot}_0$ and $\vertiii{\cdot}_{n}$ are LFC for all $n\in\mathbb{N}$, then $\vertiii{\cdot}$ is LFC.

It is an equivalent norm as well since by item $(i)$ we have that 
$$\sup_{n\in\mathbb{N}}\vertiii{\cdot}_{\eta_n}\leq \vertiii{\cdot}\leq \alpha\prod_{n\in\mathbb{N}} (1+\eta_n)\vertiii{\cdot}_0.$$
Finally, it follows from $(ii)$ that $\vertiii{x}=\vertiii{x}_{\eta_n}$ for all $x\in C(f_n,a_n)$ such that $x\notin \bigcup_{i\neq n}C(f_i,b_i)$.

\end{proof}

We can finally proceed to the proofs of our two main results. 

The argument we follow in both proofs is very similar. The main difference between the separable and the $c_0(\Gamma)$ cases lies in the smooth approximation theorems we use. In the separable case, we use Theorem 2.10 in \cite{HajTal14} by Hájek and Talponen, which states that in a separable Banach space admitting a $C^k$-smooth norm, every norm can be approximated uniformly on bounded sets by a $C^k$-smooth norm. This is a powerful result which is the culmination of several partial solutions given in \cite{DevFonHaj96,DevFonHaj98}. Let us briefly mention as well that it is currently unknown whether this theorem holds in the non-separable setting, and this is the only obstacle preventing us generalizing Theorem \ref{Main_Theorem_A} to a larger class of Banach spaces.

We will discuss the result used for the $c_0(\Gamma)$ case before its proof below. 

\begin{proof}[Proof of Theorem \ref{Main_Theorem_A}]

    Let $k\in\mathbb{N}\cup\{\infty\}$, and let $(X,\|\cdot\|)$ be a separable Banach space such that $\|\cdot\|$ is a $C^k$-smooth norm. Fix $0<\delta<1/2$. Consider a decreasing sequence $\{\eta_n\}_{n\in\mathbb{N}}$ of strictly positive numbers converging to $0$ such that $ 1\leq \prod_{n\in\mathbb{N}} (1+\eta_n) \leq 2$ and such that $\eta_n< \sqrt[4]{\frac{4-\delta}{4-2\delta}}-1$ for all $n\in\mathbb{N}$.
    
    Let us prove the following general claim:
    \begin{claim}
        \label{Claim1_Biorthogonal}
        In a Banach space $(X,\|\cdot\|)$, given $\varepsilon>0$, there exists a sequence of pairs $\{(x_n,f_n)\}_{n\in\mathbb{N}}$ in $S_{\|\cdot\|}\times S_{\|\cdot\|^*}$ such that $(f_n)_{n\in\mathbb{N}}$ is weak$^*$ null, $\langle f_n,x_n\rangle =1$ for all $n\in\mathbb{N}$ and $\langle f_n,x_m\rangle <\varepsilon$ for all $n\neq m\in\mathbb{N}$.
    \end{claim}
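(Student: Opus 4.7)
The plan is to prove the claim by induction on $n$, combining the Josefson--Nissenzweig theorem (which furnishes a weak$^\ast$-null sequence in $S_{X^\ast}$) with the Bishop--Phelps--Bollob\'as theorem (which upgrades approximate norm attainment to exact norm attainment with controlled perturbations). I may assume $X$ is infinite-dimensional, since otherwise $S_{X^\ast}$ carries no weak$^\ast$-null sequence and the statement is vacuous.

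First I would fix a weak$^\ast$-null sequence $\{g_k\}\subset S_{X^\ast}$ supplied by Josefson--Nissenzweig and construct the pairs $(x_n,f_n)$ one at a time. Assuming $(x_i,f_i)_{i<n}$ have already been produced, set $V_n=\mathrm{span}\{x_1,\dots,x_{n-1}\}$ and $W_n=\bigcap_{i<n}\ker f_i$; since $W_n$ has finite codimension in $X$, it is infinite-dimensional. At stage $n$ I would: (i) select $k_n$ large enough that $|g_{k_n}(x_i)|<\varepsilon/3$ for every $i<n$, which is possible by weak$^\ast$-nullity; (ii) locate a unit vector $y_n\in W_n$ on which $g_{k_n}$ nearly attains its norm, i.e.\ $g_{k_n}(y_n)>1-\eta_n$ for a prescribed small $\eta_n$; (iii) apply Bishop--Phelps--Bollob\'as to the pair $(g_{k_n},y_n)$ to extract $(f_n,x_n)\in S_{X^\ast}\times S_X$ with $f_n(x_n)=1$ and $\|f_n-g_{k_n}\|,\|x_n-y_n\|<\sqrt{2\eta_n}$.

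The verification of the stated properties is then a straightforward perturbation calculation. For $i<n$,
\[|\langle f_n,x_i\rangle|\leq|g_{k_n}(x_i)|+\|f_n-g_{k_n}\|<\varepsilon/3+\sqrt{2\eta_n},\]
and, using $y_n\in W_n\subset\ker f_i$ in the second estimate,
\[|\langle f_i,x_n\rangle|\leq|f_i(y_n)|+\|x_n-y_n\|=\sqrt{2\eta_n}.\]
Both are below $\varepsilon$ provided $\eta_n$ is chosen small enough, giving in particular $\langle f_n,x_m\rangle<\varepsilon$ for every $n\neq m$. The weak$^\ast$-nullity of $(f_n)$ is preserved because $\|f_n-g_{k_n}\|\to 0$ and $\{g_{k_n}\}$ inherits weak$^\ast$-nullity as a subsequence of $\{g_k\}$.

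The hard part is step (ii): a generic Josefson--Nissenzweig sequence need not satisfy $\|g_{k_n}|_{W_n}\|\approx 1$, so there may be no $y_n\in W_n\cap S_X$ on which $g_{k_n}$ is near the norm. I would circumvent this by replacing $\{g_k\}$ at each stage with a weak$^\ast$-null sequence living inside the annihilator $V_n^{\circ}\subset X^\ast$, obtained by applying Josefson--Nissenzweig to the infinite-dimensional quotient $X/V_n$ and lifting via the canonical identification $V_n^{\circ}\cong(X/V_n)^\ast$. The selected $g_{k_n}$ then satisfies $g_{k_n}(x_i)=0$ for every $i<n$ automatically, and the remaining requirement that $\|g_{k_n}|_{W_n}\|$ be close to $1$ reduces to controlling the projection $P_n(x)=\sum_{i<n}f_i(x)x_i$, whose norm can be kept tame by tightening the almost-biorthogonality along the induction. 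This interplay between $\varepsilon$, the tolerances $\eta_n$, and the growth of $\|P_n\|$ is the technical heart of the argument and the step I expect to require the most care.
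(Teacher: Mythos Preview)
Your outline shares the essential ingredients with the paper's proof --- Josefson--Nissenzweig for the weak$^*$-null sequence and Bishop--Phelps--Bollob\'as for exact norm attainment --- and you correctly isolate step~(ii) as the crux. However, your proposed fix has a genuine gap. If at stage $n$ you draw $g_{k_n}$ from a \emph{fresh} Josefson--Nissenzweig sequence in $(X/V_n)^*\cong V_n^{\circ}$, then the functionals $g_{k_n}$ no longer form a subsequence of a single weak$^*$-null sequence, and your earlier argument that $(f_n)$ is weak$^*$-null (via $\|f_n-g_{k_n}\|\to 0$) collapses: there is no evident reason for the diagonal sequence $(g_{k_n})_n$ itself to be weak$^*$-null. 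Separately, the map $P_n(x)=\sum_{i<n}f_i(x)x_i$ you invoke is not a projection when the system is only almost biorthogonal (one checks $P_n^2\neq P_n$ unless $f_i(x_j)=\delta_{ij}$), so the stated reduction to ``controlling $\|P_n\|$'' is not justified as written.

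The paper avoids all of this by keeping a single Josefson--Nissenzweig sequence throughout and resolving your step~(ii) with a short compactness argument. Since $g_k\to 0$ weak$^*$ and $\mathrm{span}\{y_1,\dots,y_{n-1}\}$ is finite-dimensional, for any bounded linear map $T$ with range in that span one has $\sup_{\|x\|\le 1}|g_k(Tx)|\to 0$; hence $\sup_{\|x\|\le 1}\langle g_k,(I-T)x\rangle$ must approach $1$ for large $k$. The paper builds a genuine projection $T_{n-1}$ onto $\mathrm{span}\{y_1,\dots,y_{n-1}\}$ from the inductive data (arranging $\langle f_{n_i},y_j\rangle=0$ for $i<j$ so that the rank-one pieces compose to a projection and $(I-T_{n-1})x\in\bigcap_{i<n}\ker f_{n_i}$), then selects $y_n$ in the range of $I-T_{n-1}$ and applies Bishop--Phelps--Bollob\'as once at the end to the whole subsequence. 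Weak$^*$-nullity of the resulting $(f_n)$ is then automatic. This compactness step is exactly the missing idea in your step~(ii), and it makes the detour through quotients unnecessary.
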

    \begin{proof}
    By Josefson$-$Nissenzweig's Theorem, there exists a sequence $\{f_n\}_{n\in\mathbb{N}}$ in $S_{\|\cdot\|^*}$ which converges weak$^*$ to $0$. We are going to define inductively a sequence $\{y_k\}_{k\in\mathbb{N}}$ in $S_{\|\cdot\|}$ and a subsequence $\{f_{n_k}\}_{k\in\mathbb{N}}$ such that for all $k\in\mathbb{N}$:
    \begin{itemize}
        \item[(i)]  $\langle f_{n_k},y_k\rangle \geq 1-2^{-k}\varepsilon$,
        \item[(ii)] $\langle f_{n_j},y_i\rangle <\varepsilon/2$ for all $i,j\leq k$ with $i<j$. 
        \item[(iii)]$\langle f_{n_i},y_j\rangle =0$ for all $i,j\leq k$ with $i< j$.  
    \end{itemize} 
    Put $n_1=1$ and choose any $y_1$ in $S_{\|\cdot\|}$ such that $\langle f_1,y_1\rangle \geq 1-\varepsilon/2$.

    Suppose we have defined the first $k$ terms of such a sequence. Since $\{f_n\}_{n\in\mathbb{N}}$ is weak$^*$ null, we may find a natural number $m_1$ such that $\langle f_m, y_i\rangle <\varepsilon/2$ for all $i\leq k$ and $m\geq m_1$.
    
    On the other hand, for each $i\leq k$ we can define the linear projection $P_i\colon X\rightarrow \text{span}\{y_i\}$ given by $P_ix=\frac{\langle f_{n_i},x\rangle}{\langle f_{n_i},y_i\rangle} y_i$. Inductively, we define another finite sequence of linear maps $\{T_i\colon X\rightarrow \text{span}\{y_1,\dots,y_i\}\}_{i\leq k}$ by $T_1=P_1$ and $T_{i+1}=T_i+P_{i+1}\circ(I-T_i)$ for $1\leq i\leq k-1$. By property $(iii)$, it holds that $P_i\circ P_j =0$ for all $1\leq i<j\leq k$, and thus we can prove inductively that $T_i\circ P_j=0$ for all $1\leq i<j \leq  k$. Therefore, again inductively we obtain that for all $1\leq i\leq k$, the map $T_i$ is a linear projection onto the subspace $\text{span}\{y_1,\dots,y_i\}$. Moreover, yet another inductive argument shows that for all $i\leq k$ it holds that $P_i\circ T_k=P_i$, and consequently $\langle f_{n_i},(I-T_k)x\rangle =0$. 
    
    We now observe that there exists $m_2\in \mathbb{N}$ such that 
    $$ \sup_{x\in S_{\|\cdot\|}}\langle f_m, (I-T_k)x\rangle\geq 1-2^{-(k+2)}\varepsilon $$
    for all $m\geq m_2$. Indeed, otherwise we would be able to find an infinite sequence $\{z_m\}_{m\in\mathbb{N}}$ in the finite-dimensional subspace $\text{span}\{y_1,\dots,y_k\}$ such that $\{\langle f_m,z_m\rangle\}_{m\in\mathbb{N}}$ is bounded away from $0$, and a usual compactness argument yields a contradiction with the fact that $\{f_n\}_{n\in\mathbb{N}}$ is weak$^*$ null.

    Now, we define the next functional in the subsequence as $f_{n_{k+1}}$ with $n_{k+1}\geq \max \{m_1,m_2\}$. Property $(ii)$ of the induction is verified by the choice of $m_1$. On the other hand, we have selected $m_2$ such that we can find a point $y_{k+1}$ in the sphere $S_{\|\cdot\|}$ with $\langle f_{n_{k+1}},y_{k+1}\rangle\geq 1-2^{-(k+1)}\varepsilon$ and $\langle f_{n_i},y_{k+1}\rangle =0$ for all $i\leq k$. The induction is thus complete.

    To finish proving the claim, it suffices to apply Bishop$-$Phelps$-$Bollobás' Theorem (see e.g.: Page 376 in \cite{FabHabHajMonZiz11}) to the sequence $\{(f_{n_k},y_k)\}_{k\in\mathbb{N}}$.

    \end{proof}

Consider then a sequence of pairs $\{(x_n,f_n)\}_{n\in\mathbb{N}}$ in $S_{\|\cdot\|}\times S_{\|\cdot\|^*}$ such that $\{f_n\}_{n\in\mathbb{N}}$ is weak$^*$ null, $\langle f_n,x_n\rangle =1$ for all $n\in\mathbb{N}$ and $\langle f_n,x_m\rangle <\frac{\delta}{2(2-\delta)}$ for all $n\neq m\in\mathbb{N}$, as given by the claim.

For each $n\in\mathbb{N}$, denote by $\vertiii{\cdot}_{(x_n,f_n),\delta}$ the equivalent norm in $X$ given by Lemma \ref{Prop1_Description_norm} that we have studied in the previous section.

By Theorem 2.10 in \cite{HajTal14}, there exists a $C^k$-smooth norm $\vertiii{\cdot}_0$ such that
$$ \vertiii{\cdot}_0\leq (1+\eta_1)^2\|\cdot\|\leq (1+\eta_1)\vertiii{\cdot}_0.$$    
With the same theorem we can approximate as well each $\vertiii{\cdot}_{(x_n,f_n),\delta}$ by an equivalent $C^k$-smooth norm $\vertiii{\cdot}_{\eta_n}$ such that
$$\vertiii{\cdot}_{\eta_n}\leq \vertiii{\cdot}_{(x_n,f_n),\delta}\leq (1+\eta_n)\vertiii{\cdot}_{\eta_n} $$
for every $n\in\mathbb{N}$. 

Notice that by definition of $\vertiii{\cdot}_{(x_n,f_n),\delta}$, the choice of the sequence $\{\eta_n\}_{n\in\mathbb{N}}$, and Lemma \ref{Lemma3_Norm_in_cone}, we have the two following inequalities:
\begin{align*}
    \vertiii{x}_{\eta_n}\leq \frac{1}{1+\eta_n}\vertiii{x}_0,\quad &\text{ for all }x\in X\setminus\left( C\left(f_n,1-\delta\right)\right),\text{ and}\\
    \vertiii{x}_0\leq \frac{1}{1+\eta_n}\vertiii{x}_{\eta_n},\quad &\text{ for all }x\in C\left(f_n,1-\frac{\delta}{4}\right).
\end{align*}

We apply Theorem \ref{Theorem1_Main_construction} to $\vertiii{\cdot}_0$ and the sequence $\{\vertiii{\cdot}_{\eta_n}\}_{n\in\mathbb{N}}$ and obtain an equivalent $C^k$-smooth norm $\vertiii{\cdot}\colon X\rightarrow [0,+\infty)$ such that 
$$\sup_{n\in\mathbb{N}}\vertiii{\cdot}_{\eta_n}\leq \vertiii{\cdot}\leq 4\|\cdot\|$$
and such that for every $n\in\mathbb{N}$, if a point $x\in X$ belongs to $C\left(f_n,1-\frac{\delta}{4}\right)$ and is not in $C(f_i,1-\delta)$ for any $i\in\mathbb{N}$ with $i\neq n$, then $\vertiii{x}=\vertiii{x}_{\eta_n}$. Notice that by the choice of the sequence $\{x_n\}_{n\in\mathbb{N}}$, it follows that for every $n\in\mathbb{N}$ 
$$B_{\|\cdot\|}\left(\left(1-\frac{\delta}{2}\right)x_{n},\frac{\delta}{16}\right)\subset C\left(f_n,1-\frac{\delta}{4}\right)\setminus\left(\bigcup_{i\neq n}C(f_i,1-\delta)\right),$$
and thus the norm $\vertiii{\cdot}$ coincides with $\vertiii{\cdot}_{\eta_n}$ in the ball $B_{\|\cdot\|}\left(\left(1-\frac{\delta}{2}\right)x_{n},\frac{\delta}{16}\right)$.

Let us show that $\vertiii{\cdot}$ is not uniformly Gâteaux in any direction. Fix $h\in S_{\|\cdot\|}$, and put $\varepsilon_0=\frac{\delta}{16}$. We are going to show that for all $\tau>0$, we can find a natural number $n_0\in\mathbb{N}$ and $0<t_0<\tau$ such that 
$$D\left(\vertiii{\cdot},\left(1-\frac{\delta}{2}\right)x_{n_0}, h, t_0\right)>\varepsilon_0, $$
which implies that $\vertiii{\cdot}$ is not uniformly Gâteaux in the direction of $h$. 
Hence, fix $\tau>0$. Without loss of generality we may assume that $\tau<\frac{\delta}{16}$. Now, put $n_0\in\mathbb{N}$ such that $|\langle f_{n_0},h\rangle|<\frac{\delta}{2(4-\delta)}$ and such that $\frac{\eta_{n_0}}{1+\eta_{n_0}}\frac{32}{\delta}<\tau$. Then, if $t_0 = \frac{\eta_{n_0}}{1+\eta_{n_0}}\frac{32}{\delta}$, we have that $t_0<\tau<\frac{\delta}{16}$. 

Therefore, the point $\left(1-\frac{\delta}{2}\right)x_{n_0}\pm t_0h$ belongs to the ball $B_{\|\cdot\|}\left(\left(1-\frac{\delta}{2}\right)x_{n_0},\frac{\delta}{16}\right)$, which implies that
$$ D\left(\vertiii{\cdot},\left(1-\frac{\delta}{2}\right)x_{n_0}, h, t_0\right)=D\left(\vertiii{\cdot}_{\eta_{n_0}},\left(1-\frac{\delta}{2}\right)x_{n_0}, h, t_0\right).$$
Notice that $t_0<\frac{\delta\left(1-\frac{\delta}{2}\right)}{(1-\delta)+|\langle f_0,h\rangle|}$, so we can estimate the previous quotient using Lemma \ref{Lemma4_Approx_estimates}, and finally obtain:
\begin{align*}
    D\left(\vertiii{\cdot},\left(1-\frac{\delta}{2}\right)x_{n_0}, h, t_0\right)&\geq \left(\frac{2}{1+\eta_{n_0}}\right)\left(\frac{\delta-(4-\delta)|\langle f_0,h\rangle|}{2\left(1-\frac{\delta}{2}\right)(2-\delta)}-\frac{\eta_{n_0}}{t_0}\right)\\
    &\geq \frac{\delta}{4\left(1-\frac{\delta}{2}\right)(2-\delta)}-\frac{2\eta_{n_0}}{(1+\eta_{n_0})t_0}>\frac{\delta}{8}-\frac{\delta}{16}=\varepsilon_0.
\end{align*}
It only remains to show that unit ball of $\vertiii{\cdot}$ is dentable. Notice that since $\left(1-\frac{\delta}{2}\right)x_n$ is in the unit ball $B_{\vertiii{\cdot}}$ for all $n\in\mathbb{N}$, the slice $S\left(B_{\vertiii{\cdot}},f_n,1-\frac{\delta}{2}-\varepsilon\right)$ is nonempty for every $n\in\mathbb{N}$ and every $\varepsilon>0$. Moreover, for every $r>0$ we can find $\varepsilon_0$ and $n_0$ such that $\left(\frac{8-2\delta}{\delta}\right)\left(\varepsilon_0+\eta_{n_0}\left(1-\frac{\delta}{2}\right)\right)<r$, and thus by Lemma \ref{Lemma4_Approx_estimates} we obtain that the diameter of the associated slice $S\left(B_{\vertiii{\cdot}},f_{n_0},1-\frac{\delta}{2}-\varepsilon_0\right)$ in the equivalent norm $\vertiii{\cdot}_{(x_{n_0},f_{n_0}),\delta}$ is less than $r$. We conclude that $B_{\vertiii{\cdot}}$ is dentable.
\end{proof}

In the $c_0(\Gamma)$ application of our method, to obtain that the final renorming is additionally LFC, we will use Theorem 1 in \cite{FaHaZi97} by Fabián, Hájek and Zizler, who proved that every lattice norm in $c_0(\Gamma)$ can be approximated by a $C^\infty$-smooth LFC norm. Although the authors in the aforementioned article did not mention explicitly in their statement of the theorem that the approximating norm they obtain is LFC, this is indeed shown in their proof. Alternatively, we may refer to Theorem 1.1.26 in \cite{RuPhd}, where the exact statement that we need is given, along with a simpler proof of the result.

Notice as well that in the case of $c_0(\Gamma)$ we can work directly with the biorthogonal system given by the vectors in the canonical bases of $c_0$ and $\ell_1$, which means that we do not need Claim \ref{Claim1_Biorthogonal} to obtain an (almost) biorthogonal sequence; and also implies that the norms $\vertiii{\cdot}_{(e_\gamma,e_\gamma),\delta}$ are lattice norms for every $\gamma\in\Gamma$ as we proved in Lemma \ref{Prop2_Lattice_norm}.

\begin{proof}[Proof of Theorem \ref{Main_Theorem_B}]
    As mentioned, the proof of this theorem follows the same steps as the previous one, so we outline the whole argument but omit some explanations already given above.
    
    Write $\mathbb{N}\subset \Gamma$ for simplicity. Consider $0<\delta<1/2$, and consider a decreasing sequence $\{\eta_n\}_{n\in\mathbb{N}}$ of strictly positive numbers converging to $0$ such that $ 1\leq \prod_{n\in\mathbb{N}} (1+\eta_n) \leq 2$ and such that $\eta_n< \sqrt[4]{\frac{4-\delta}{4-2\delta}}-1$ for all $n\in\mathbb{N}$. 

    For every $n\in\mathbb{N}$, define $\vertiii{\cdot}_{(e_n,e_n),\delta}$ to be the norm given by Lemma \ref{Prop1_Description_norm} (obtained from the starting supremum norm $\|\cdot\|_\infty$). For every $n\in\mathbb{N}$, Lemma \ref{Prop2_Lattice_norm} indicates that the norm $\vertiii{\cdot}_{(e_n,e_n),\delta}$ is a lattice norm, and so by Theorem 1 in \cite{FaHaZi97} we can approximate it by an equivalent LFC and $C^\infty$-smooth norm $\vertiii{\cdot}_{\eta_n}$ such that
    $$\vertiii{\cdot}_{\eta_n}\leq \vertiii{\cdot}_{(e_n,e_n),\delta}\leq (1+\eta_n)\vertiii{\cdot}_{\eta_n}. $$

    Using again Theorem 1 in \cite{FaHaZi97}, consider in $(c_0(\Gamma),\|\cdot\|_\infty)$ an equivalent LFC and $C^\infty$-smooth norm, which we denote by $\vertiii{\cdot}_0$ such that 
    $$\vertiii{\cdot}_0\leq (1+\eta_1)^2\|\cdot\|_\infty\leq(1+\eta_1)\vertiii{\cdot}_0. $$

We can apply Theorem \ref{Theorem1_Main_construction} to $\vertiii{\cdot}_0$ and the sequence of norms $\{\vertiii{\cdot}_{\eta_n}\}_{n\in\mathbb{N}}$ to obtain an equivalent $C^\infty$-smooth and LFC norm $\vertiii{\cdot}\colon c_0(\Gamma)\rightarrow [0,\infty)$ such that 
$$\sup_{n\in\mathbb{N}}\vertiii{\cdot}_{\eta_n}\leq \vertiii{\cdot}\leq 4\vertiii{\cdot}_0,$$
with the property that for every $n\in\mathbb{N}$ and every $x\in C(e_n,1-\frac{\delta}{4})$ such that $x\notin \bigcup_{i\neq n}C(e_i,1-\delta)$ it holds that $\vertiii{x}=\vertiii{x}_{\eta_n}$. The proof that $\vertiii{\cdot}$ is not uniformly Gâteaux and that the unit ball $B_{\vertiii{\cdot}}$ is dentable is now the exact same as in the previous theorem. 
\end{proof} 

\section*{Acknowledgments}
This research has been supported by PAID-01-19, by grant PID2021-122126NB-C33 funded by MCIN/AEI/10.13039/501100011033 and by “ERDF A way of making Europe”. This research was also supported by SGS22/053/OHK3/1T/13.

The author wishes to thank Petr Hájek for many and invaluable conversations which led to the elaboration of this paper. The author is also deeply grateful to Antonio José Guirao and Vicente Montesinos for their careful reading of the preprint and their constructive remarks.
\printbibliography

\end{document}